\newcommand{\de}{\partial}
\newcommand{\di}{\text{d}}
\newcommand{\norm}[1]{\left\Vert #1\right\Vert}
\newcommand{\NN}{\mathbb{N}}
\newcommand{\ZZ}{\mathbb{Z}}
\newcommand{\RR}{\mathbb{R}}
\newcommand{\TT}{\mathbb{T}}
\newcommand{\PP}{\mathbb{P}}
\newtheorem{theorem}{Theorem}[section]
\newtheorem{prop}[theorem]{Proposition}
\newtheorem{lemma}[theorem]{Lemma}
\theoremstyle{remark}
\newtheorem{rmk}[theorem]{Remark}
\title{Enhanced Dissipation and Transition Threshold for the Poiseuille Flow in a Periodic Strip}
\author{Augusto Del Zotto\footnote{Department of Mathematics, Imperial College London, SW7 2AZ, UK. 
E-mail: a.del-zotto20@imperial.ac.uk}}
\begin{document}
\maketitle
\abstract{We consider the solution to the 2D Navier-Stokes equations around the Poiseuille flow $(y^2,0)$ on $\TT\times\RR$ with small viscosity $\nu>0$.  Via a hypocoercivity argument, we prove that the $x-$dependent modes of the solution to the linear problem undergo  the enhanced dissipation effect with a rate proportional to $\nu^{\frac{1}{2}}$.  Moreover, we study the nonlinear enhanced dissipation effect and we establish a transition threshold of $\nu^{\frac{2}{3}+}$. Namely, when the perturbation of the Poiseuille flow is size at most $\nu^{\frac{2}{3}+}$, its size remains so for all times and the enhanced dissipation persists with a rate proportional to $\nu^{\frac{1}{2}}$.  
}

\section{Introduction}
We study the 2D Navier-Stokes equations 
\begin{equation}\label{NS}
\left\{
\begin{array}{l}
\de_t U + (U\cdot \nabla)U+\nabla P-\nu \Delta U=0,\\
\nabla\cdot U=0,\\
\end{array}
\right.
\end{equation}
defined on the domain $\TT\times\RR$.  $U=(U_1,U_2)$ is the velocity vector field, $P$ is the scalar pressure and $\nu$ is the viscosity coefficient of the fluid, proportional to the inverse of the Reynolds number.
Defining the vorticity of $U$ as $\Omega=\nabla^\perp \cdot U$,  where $\nabla^\perp =(-\de_y,\de_x)$, it is possible to remove the pressure term and to rewrite the above system as
\begin{equation}\label{NSvort}
\left\{
\begin{array}{l}
\de_t \Omega + U\cdot \nabla\Omega-\nu \Delta \Omega=0,\\
\Omega=\Delta\Psi,\\
U=\nabla^\perp\Psi.
\end{array}
\right.
\end{equation}
Here $\Psi$ is the corresponding stream function for the vector field $U$.
It is easy to see that the so called Poiseuille Flow $$U_P=(y^2,0), \qquad   \Omega_P=-2y,$$
is a stationary solution of \eqref{NSvort}.
To study the dynamic near the Poiseuille flow we consider a small perturbation of it. We set $U=U_P+u$,  so $\Omega=\Omega_P + \omega$,  where $u,\omega$ are the perturbations for the velocity field and the vorticity, respectively. The vorticity formulation for $\omega$ reads as follow
\begin{equation}\label{IVP Poiseuille}
\left\{
\begin{array}{l}
\de_t \omega + y^2\de_x\omega-2\de_x\psi-\nu\Delta\omega=-u\cdot\nabla\omega ,\\
\omega =\Delta \psi,\\
u=\nabla^\perp\psi .
\end{array}
\right.
\end{equation}
The boundary conditions here are set to be periodic for the $x$ variable, while for the $y$ direction $\omega$ is assumed to have sufficient decay at infinity. The stability of the Poiseuille flow can then be seen as the decay of the solution to \eqref{IVP Poiseuille}.  An overview of the paper is presented in the following subsections of the introduction. The first part is devoted to the analysis of the linear problem while the second part focuses on the transition threshold problem for the fully non linear equation.
\subsection{Linear enhanced dissipation and estimates for $\mathcal{L}_\nu$}
We define the linear operator 
\begin{equation}\label{linear operator}
\mathcal{L}_\nu = - y^2\de_x+2\de_x\Delta^{-1}+\nu\Delta
\end{equation}
associated to the linearized counterpart of \eqref{IVP Poiseuille}. 
In the first part, we establish some decay estimates for the semigroup generated by $\mathcal{L}_\nu$ in the usual $L^2$ norm. 
Our first result is the following,  here $\PP_{\neq}$ is the projection on the nonzero $x$-Fourier modes.
\begin{theorem}\label{thm hypo intro}
Let $\nu<1$ and $g\in L^2(\TT\times\RR)$. There exist constants $C_0,c_0>0$ independent of $\nu$ such that
\begin{equation}\label{linear enhanced dissipation}
\norm{e^{\mathcal{L}_\nu t }\PP_{\neq}(g)}_{L^2}\leq C_0e^{-c_0\nu^\frac{1}{2}t}\norm{\PP_{\neq} (g)}_{L^2},
\end{equation}
for all $t\geq 0.$
\end{theorem}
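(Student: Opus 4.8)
The plan is to prove the enhanced dissipation estimate via a hypocoercivity argument. Since the linear operator $\mathcal{L}_\nu$ commutes with $\de_x$, I would first decompose the solution into $x$-Fourier modes. Writing $\omega(t,x,y) = \sum_{k\neq 0} \omega_k(t,y)e^{ikx}$, the projection $\PP_{\neq}$ restricts to modes with $k\neq 0$, and on each such mode the operator acts as
\begin{equation}
\mathcal{L}_{\nu,k} = -iky^2 - 2ik(\de_y^2 - k^2)^{-1} + \nu(\de_y^2 - k^2).
\end{equation}
By Plancherel, it suffices to establish, uniformly in $k\neq 0$, the decay $\norm{e^{\mathcal{L}_{\nu,k}t}\omega_k}_{L^2_y}\leq C_0 e^{-c_0\nu^{1/2}t}\norm{\omega_k}_{L^2_y}$ with constants independent of $\nu$ and $k$; summing in $k$ then recovers \eqref{linear enhanced dissipation}. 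Since the dominant difficulty already appears for $|k|\geq 1$ and the rate $\nu^{1/2}$ is governed by the balance between the transport term $-iky^2$ and the dissipation $\nu\de_y^2$, I would focus on a single mode and track the $k$-dependence carefully.

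The core of the argument is to build an augmented energy functional that is equivalent to $\norm{\omega_k}_{L^2_y}^2$ but whose dissipation controls the full norm, thereby circumventing the fact that the skew-symmetric transport part $-iky^2$ produces no direct decay. Following the standard hypocoercivity recipe, I would introduce a functional of the form
\begin{equation}
\Phi[\omega_k] = \norm{\omega_k}_{L^2}^2 + \alpha\, a\,\mathrm{Re}\langle \de_y\omega_k, \de_x\text{-weighted term}\rangle + \beta\, b\,\norm{\de_y\omega_k}_{L^2}^2 + \dots,
\end{equation}
with carefully chosen $\nu$- and $k$-dependent weights on the cross terms (typically scaling like powers of $\nu^{1/2}|k|^{1/2}$) so that the mixed term injects coercivity in the directions the dissipation alone misses. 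The key computation is to differentiate $\Phi$ along the flow, use the equation to replace $\de_t\omega_k$, and show that $\frac{d}{dt}\Phi \leq -c\,\nu^{1/2}|k|\,\Phi$ after absorbing error terms via Young's inequality. The presence of the nonlocal term $-2ik(\de_y^2-k^2)^{-1}$ requires elliptic estimates to control $\Delta^{-1}$ and its derivatives; these should be manageable since $(\de_y^2-k^2)^{-1}$ is a bounded, smoothing operator on each nonzero mode with norm controlled by $|k|^{-2}$.

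The main obstacle I anticipate is the precise choice of the weights and the relative sizes of the coefficients in $\Phi$ so that all cross terms and error terms arising from the commutator structure are absorbed while retaining the sharp $\nu^{1/2}$ rate. In particular, the transport coefficient $y^2$ is unbounded, so the commutator $[\de_y, y^2\de_x] = 2y\de_x$ introduces a factor of $y$ that must be controlled; this is exactly where the hypocoercive structure pays off, since the term $\norm{y\,\omega_k}$-type quantities appearing in the dissipation of the mixed term provide the missing coercivity. I would need to verify that the nonlocal lift term $2\de_x\Delta^{-1}$ does not destroy the positivity of the functional — this is the delicate point, and I expect it to require treating small and large values of $|k|$, or of the relevant frequency variable in $y$, separately, possibly with an additional commutator or a spectral/resolvent estimate to handle the regime where the elliptic operator is close to degenerate. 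Once the differential inequality $\frac{d}{dt}\Phi\leq -c_0\nu^{1/2}\Phi$ is established together with the norm equivalence $\Phi\simeq\norm{\omega_k}_{L^2}^2$, Grönwall's inequality and summation over $k$ yield the claimed estimate with a constant $C_0$ coming from the equivalence constants.
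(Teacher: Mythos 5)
Your overall frame---Fourier decomposition in $x$, a hypocoercivity functional mode by mode, uniform-in-$k$ decay, then resummation---matches the paper's. But the core of your plan is a \emph{static} functional (time-independent, $\nu,k$-dependent weights), a norm equivalence $\Phi\simeq\norm{\omega_k}_{L^2}^2$, and a direct Gr\"onwall inequality $\Phi'\leq -c\,\nu^{1/2}|k|\,\Phi$. This is essentially the approach of Coti Zelati--Elgindi--Widmayer \cite{CZEW2020}, and it runs into two obstructions that are exactly what this paper is designed to circumvent. First, the norm equivalence cannot hold: any functional containing $\norm{\de_y\omega_k}^2$ or $\norm{y\de_x\omega_k}^2$ with time-independent coefficients is infinite (or uncontrolled) for general $g\in L^2$, so $\Phi(0)\not\lesssim\norm{g}^2$. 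The paper's functional instead carries the weights $\alpha\nu t$, $\beta\nu t^2$, $\gamma\nu t^3$, which vanish at $t=0$; hence $\Phi(0)=\tfrac12\norm{g}^2$ exactly, and only the one-sided bound $\Phi(t)\geq\tfrac12\norm{\omega(t)}^2$ is ever needed---no equivalence.

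Second, the differential inequality $\Phi'\leq -c\,\nu^{1/2}|k|^{1/2}\Phi$ (note the correct rate is $\nu^{1/2}|k|^{1/2}$, not $\nu^{1/2}|k|$) is precisely what fails on $\TT\times\RR$: the domain is unbounded in $y$ and the shear $y^2$ degenerates at $y=0$, so there is no Poincar\'e-type inequality allowing the dissipation terms produced by $\Phi'$ to dominate the $\norm{\omega_k}^2$ part of $\Phi$. Forcing this through is what costs \cite{CZEW2020} the $(1+|\log\nu|)^{-1}$ loss and the weighted space with norm $\norm{f}^2_{L^2}+\norm{yf}^2_{L^2}$; your plan, if executed, would at best reproduce that weaker statement, not the theorem as stated. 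The paper never proves exponential decay of $\Phi$ at all. It proves only monotonicity,
\begin{equation*}
\Phi'(t)\leq -\gamma\nu^2 k^2 t^3\norm{e^{\mathcal{L}_\nu t}g_k}^2\leq 0,
\end{equation*}
integrates this to obtain the \emph{algebraic} bound $\norm{e^{\mathcal{L}_\nu t}g_k}^2\leq \bigl(1+\tfrac{\gamma}{2}\nu^2k^2t^4\bigr)^{-1}\norm{g_k}^2$, and then converts algebraic into exponential decay by iteration: on each window of length $t_0\sim \nu^{-1/2}|k|^{-1/2}$ the norm halves, and since the equation is autonomous and $t\mapsto\norm{e^{\mathcal{L}_\nu t}g_k}$ is decreasing, compounding over $\lfloor t/t_0\rfloor$ windows gives $e^{-c_0\nu^{1/2}|k|^{1/2}t}$. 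This iteration step---made possible by the time-dependent weights---is the missing idea in your proposal.
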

This result gives a quantitative estimate of the linear \emph{enhanced dissipation} effect.  Indeed, the timescale obtained for the nonzero modes of the initial datum is proportional to $\nu^{-1/2}$,  which is much faster than the heat equation timescale, proportional to $\nu^{-1}$.

We refer to enhanced dissipation as the phenomenon where the mixing properties of the fluid allow to improve the natural heat dissipation timescale $\mathcal{O}(\nu^{-1})$ to a faster timescale $\mathcal{O}(d(\nu)^{-1})$ that satisfies
\[\lim_{\nu\to 0}\frac{\nu}{d(\nu)}=0.\]
This phenomenon has been widely studied in the physics literature, see for example \cite{DR1981,R1879,R1883} and mathematics literature \cite{CKRZ2008,CZDE2020}. 
In the context of the Navier-Stokes equations near shear flows, we cite results for the well known Couette flow \cite{BMV2016,wei2018transition}, with a dissipation timescale $\mathcal{O}(\nu^{-1/3})$, and the Kolmogorov flow \cite{IMM2019pseudospectral,WZ2019,WZZ2020}, where the rate is known to be $\mathcal{O}(\nu^{-1/2})$. Regarding the Poiseuille flow, the first linear enhanced dissipation result was given by Coti Zelati, Elgindi and Widmayer \cite{CZEW2020} for the unbounded 2D domain $(x,y)\in\TT\times\RR$. In their paper the linear enhanced dissipation effect is established  around the Poiseuille flow. The rate obtained is proportional to $\nu^{1/2}(1+|\log\nu|)^{-1}$ in the weighted $L^2$ space with norm
$$\norm{f}^2_X=\norm{f}^2_{L^2}+\norm{yf}_{L^2}^2.$$ Ding and Lin \cite{DL2020} proved the same decay rate,  without the logarithmic correction,  for the Poiseuille flow in a bounded 2D channel $\TT\times [-1,1].$ We cite also the paper by Chen, Wei and Zhang \cite{CWZ2019}, where a $ \mathcal{O}(\nu^{-1/2})$ rate is obtained for the 3D pipe Poiseuille flow.  The approach used in the last two papers is completely different from the hypocoercivity method.  It relies on resolvent estimates and a Gearhart-Pr\"{u}ss type Theorem introduced by Wei in \cite{W2021}.

Our first result is a sharpening of \cite{CZEW2020},  indeed we are able to remove the logarithmic correction and to get a decay rate of $\mathcal{O}(\nu^{-1/2})$ in $L^2(\TT\times\RR)$.  
The proof of Theorem  \ref{thm hypo intro} relies on a hypocoercivity argument \cite{V2009hypo}, similar to the one in \cite{CZEW2020}.  Here, we construct an energy functional following Wei and Zhang  idea \cite{WZ2019},  namely, each term of the functional has a time dependent weight.  Theorem \ref{thm hypo intro} follows then from an iteration argument.  Furthermore, thanks to the time dependent weights of the energy functional, we are able to prove additional estimates on the semigroup $e^{\mathcal{L}_\nu t}$ generated by the linearized operator $\mathcal{L}_\nu$.  These estimates play a crucial role in establishing the transition threshold.

\subsection{Nonlinear enhanced dissipation and transition threshold}
Our second result concerns the transition threshold for the 2D Poiseuille flow.  The asymptotic stability of fluid motion between parallel plates was firstly analyzed by Kelvin \cite{K1887}, who introduced the following concept: the stability may depend on the viscosity coefficient $\nu$ in such a way that the stability threshold decreases whenever $\nu$ decreases. The mathematical formulation of this problem can be given as follows.
Given a norm $\norm{\cdot}_X$, find a $\gamma=\gamma(X)$ such that
\begin{equation*}
\begin{array}{ccc}
\norm{u}_X\leq \nu^\gamma & \Rightarrow & stability,\\
\norm{u}_X\gg \nu^\gamma & \Rightarrow & instability,
\end{array}
\end{equation*}
here $u$ is a perturbation of the flow.

The transition threshold problem for the Couette flow has been deeply studied recently \cite{BGM2015above,BGM2017,BGM2015below,BVW2018}, in both the frameworks of Sobolev spaces and Gevrey classes.  In the 2D case the transition threshold is known to be $\gamma\leq \frac{1}{3}$, \cite{MZ2019couette}. 
For the 2D Kolmogorov flow in the periodic box, it holds that $\gamma\leq\frac{2}{3}+\varepsilon$ for any $\varepsilon>0$,  see \cite{WZZ2020}.
For the Poiseuille flow, it has been proved by Coti Zelati, Elgindi and Widmayer \cite{CZEW2020} that $\gamma\leq \frac{3}{4}+\varepsilon$ in $\TT\times \RR$ and by Ding and Lin \cite{DL2020} that $\gamma\leq \frac{3}{4}$ in $\TT\times[-1,1]$ with Navier-slip boundary conditions, i.e. $\omega(\pm 1)=\psi(\pm 1)=0$.

In the second part of this paper we are going to show a transition threshold for the Poiseuille Flow on $\TT\times\RR$ with  $\gamma\leq \frac{2}{3}+\varepsilon.$
We are able to prove the following Theorem using the linear enhanced dissipation and the estimates on the semigroup generated by the linearized operator.
\begin{theorem}
 There exists constants $\varepsilon_0\in (0,1), C_1>0,c_1>0$ such that for all $0<\nu<1$ and for every $\omega_{in}\in L^2$ with $$\norm{\omega_{in}}_{L^2}\leq \varepsilon_0(1+|\log\nu|^\frac{1}{2})^{-1}\nu^{2/3},$$ the solution $\omega $ of \eqref{IVP Poiseuille} is global in time with the bound $$\norm{\PP_{\neq}(\omega)(t)}_{L^2} \leq C_1e^{-c_1\nu^{1/2}t}\norm{\PP_{\neq}(\omega_{in})}_{L^2}.$$
\end{theorem}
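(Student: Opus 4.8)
The plan is to run a bootstrap (continuity) argument on the nonlinear equation \eqref{IVP Poiseuille}, using the linear enhanced dissipation of Theorem \ref{thm hypo intro} as the engine that provides decay, and treating the nonlinearity $-u\cdot\nabla\omega$ as a perturbation that Duhamel's formula feeds back through the semigroup $e^{\mathcal{L}_\nu t}$. First I would split $\omega = \omega_0 + \omega_{\neq}$, where $\omega_0 = (I-\PP_{\neq})\omega$ is the $x$-average (zero mode) and $\omega_{\neq} = \PP_{\neq}\omega$ carries the nonzero frequencies, since only $\omega_{\neq}$ enjoys the fast $\nu^{1/2}$ decay. The zero mode evolves by a forced heat equation $\de_t \omega_0 - \nu\de_{yy}\omega_0 = -\PP_0(u\cdot\nabla\omega)$ with only the slow $\mathcal{O}(\nu^{-1})$ dissipation, so one must track it separately and control how it drives $\omega_{\neq}$.

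The core of the argument is a \emph{bootstrap}: assume on a maximal time interval $[0,T]$ the a priori bounds
\begin{equation*}
\norm{\PP_{\neq}\omega(t)}_{L^2} \leq 2C_0 e^{-c_0\nu^{1/2}t}\norm{\PP_{\neq}\omega_{in}}_{L^2}, \qquad \norm{\omega_0(t)}_{L^2} \leq 2\norm{\omega_{in}}_{L^2},
\end{equation*}
and then show that under the smallness hypothesis $\norm{\omega_{in}}_{L^2}\leq \varepsilon_0(1+|\log\nu|^{1/2})^{-1}\nu^{2/3}$ the constants $2C_0$ and $2$ can be improved to, say, $\tfrac{3}{2}C_0$ and $\tfrac{3}{2}$, forcing $T=+\infty$. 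To close the estimate on $\omega_{\neq}$ I would write the Duhamel representation
\begin{equation*}
\PP_{\neq}\omega(t) = e^{\mathcal{L}_\nu t}\PP_{\neq}\omega_{in} - \int_0^t e^{\mathcal{L}_\nu(t-s)}\PP_{\neq}\big(u\cdot\nabla\omega\big)(s)\,\di s,
\end{equation*}
apply Theorem \ref{thm hypo intro} to both terms, split the nonlinear interaction into $0\times\neq$, $\neq\times 0$, and $\neq\times\neq$ pieces according to which frequency block each factor sits in, and estimate each using the a priori decay together with the extra semigroup smoothing estimates that the excerpt promises are established alongside Theorem \ref{thm hypo intro} (these let one trade a spatial derivative for a factor of $\nu^{-1/2}$ or a short-time singularity, which is exactly what $u\cdot\nabla\omega$ costs). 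The factor $(1+|\log\nu|^{1/2})^{-1}$ is expected to appear precisely from integrating a logarithmically divergent kernel in the $\neq\times\neq$ self-interaction over the decay timescale. A parallel, simpler estimate on the forced heat equation closes the bound on $\omega_0$, where the forcing $\PP_0(u\cdot\nabla\omega)$ is quadratic in the decaying nonzero modes and hence time-integrable.

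The main obstacle, as in all such transition-threshold proofs, is the feedback of the slowly-decaying zero mode into the fast modes: the $\neq\times 0$ interaction $u_0\cdot\nabla\omega_{\neq}$ couples an $\mathcal{O}(1)$ quantity $u_0$ (built from $\omega_0$, which does not decay on the $\nu^{1/2}$ timescale) against $\omega_{\neq}$, and this term is the one that dictates the value of $\gamma$. One must verify that the gain from the enhanced dissipation of $\omega_{\neq}$ exactly compensates the loss from $u_0$, and the balance
\[
\nu^{-1/2}\cdot \nu^{-1/2}\cdot (\text{size})^2 \lesssim (\text{size})
\]
is what produces the threshold $\gamma = \tfrac{2}{3}$; improving on the earlier $\tfrac{3}{4}+$ of \cite{CZEW2020} hinges on using the sharp (logarithm-free) rate of Theorem \ref{thm hypo intro} and the auxiliary semigroup bounds to handle this critical coupling without wasting powers of $\nu$. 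The remaining interactions are genuinely subcritical and should close by routine applications of the semigroup estimates and Young/Gagliardo–Nirenberg inequalities.
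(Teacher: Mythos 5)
Your high-level skeleton (zero/nonzero mode splitting, Duhamel through $e^{\mathcal{L}_\nu t}$, smallness closing a self-consistent estimate, iteration in time) matches the paper's, but the quantitative core of your argument has a genuine gap: you never identify the linear estimates that actually produce the exponent $2/3$, and the balance you propose does not produce it either. The paper's proof runs on two specific \emph{space-time} bounds for the semigroup (Lemma \ref{main lemma}): $\int_0^T\norm{\de_x e^{\mathcal{L}_\nu t}g}\,\di t\leq C\nu^{-2/3}\norm{g}$ and $\int_0^T\norm{\nabla\Delta^{-1}e^{\mathcal{L}_\nu t}\PP_{\neq}(g)}_{L^\infty}^2\,\di t\leq C(1+|\log\nu|)\nu^{-1/3}\norm{g}^2$. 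The $\nu^{-2/3}$ arises from gluing the heat decay $(\nu t)^{-1/2}$ (for $t\lesssim\nu^{-1/3}$) to the hypocoercivity-derived decay $(\nu t^2)^{-1}$ (for $t\gtrsim\nu^{-1/3}$); it is not a matter of ``trading a spatial derivative for $\nu^{-1/2}$'', and indeed your heuristic $\nu^{-1/2}\cdot\nu^{-1/2}\cdot(\text{size})^2\lesssim(\text{size})$ yields a threshold of order $\nu$, not $\nu^{2/3}$. With these two bounds the paper controls the functional $A(t)=\nu^{2/3}\int_0^t\norm{\de_x\tilde{\omega}}+\nu^{1/6}(1+|\log\nu|)^{-1/2}\bigl(\int_0^t\norm{\tilde{u}}_{L^\infty}^2\bigr)^{1/2}$ through Duhamel, and the nonlinearity is absorbed exactly when $\norm{\omega_{in}}\nu^{-2/3}(1+|\log\nu|)^{1/2}\ll 1$, which is the stated threshold; without these time-integrated estimates your bootstrap has no mechanism to generate the exponent $2/3$.

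Second gap: your classification of the interactions is wrong. The term you single out, $u_s\de_x\tilde{\omega}$, is indeed critical (it costs $\norm{u_s}_{L^\infty}\cdot\nu^{-2/3}\lesssim\norm{\omega_{in}}\nu^{-2/3}$), but the terms you dismiss as ``genuinely subcritical'' are not: in the piece $\tilde{u}\cdot\nabla\omega$, and in particular $\tilde{u}\cdot\nabla\omega_s$ where the nonzero-mode velocity hits the gradient of the slow zero mode, the paper must pay $\bigl(\int_0^t\norm{\tilde{u}}_{L^\infty}^2\bigr)^{1/2}\bigl(\int_0^t\norm{\nabla\omega}^2\bigr)^{1/2}\lesssim\nu^{-1/6}(1+|\log\nu|)^{1/2}\cdot\nu^{-1/2}\norm{\omega_{in}}$, i.e. a cost of $\nu^{-2/3}(1+|\log\nu|)^{1/2}\norm{\omega_{in}}$, because $\nabla\omega_s$ only enjoys the slow heat dissipation $\int_0^t\norm{\nabla\omega}^2\leq\nu^{-1}\norm{\omega_{in}}^2$. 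This term is exactly as critical as $u_s\de_x\tilde{\omega}$, and it, not the $\neq\times\neq$ self-interaction, is where the logarithm in the threshold enters: the log originates in the frequency sum $\sum_{0<k<\nu^{-1/3}}k^{-1}$ inside the linear $L^2_t L^\infty$ estimate \eqref{terzastima} for $\tilde{u}$, not from ``integrating a logarithmically divergent kernel over the decay timescale''. A correct completion of your proposal would therefore have to import both estimates of Lemma \ref{main lemma} and treat the $\tilde{u}\cdot\nabla\omega_s$ coupling on the same critical footing as $u_s\de_x\tilde{\omega}$.
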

The proof is based on careful estimates of the non-zero modes of the nonlinear term in \eqref{IVP Poiseuille}.
We remark that our theorem gives a better transition threshold for the planar Poiseuille flow in $\TT\times\RR$, bringing it from $\nu^{\frac{3}{4}+}$ to $\nu^{\frac{2}{3}+}$, more precisely $(1+|\log\nu|^\frac{1}{2})^{-1}\nu^{\frac{2}{3}}$. 

\paragraph{Structure of the paper}
In Section 2 we define the modified energy functional \eqref{functional} and we prove Theorem \ref{thm hypo}. We then deduce the enhanced dissipation for the linear problem \eqref{IVP linearized}.  Section 3 is devoted to prove Lemma \ref{main lemma}, which establishes additional estimates on the semigroup generated by the linearized operator. Section 4 concludes the paper and contains the proof of Theorem \ref{transition thm}.
\paragraph{Notation}
Throughout this paper we use 
$$\nabla_k=\PP_k\nabla=(ik,\de_y)$$ for the projected gradient onto the $\pm k$-th $x$-Fourier modes and
$$\Delta_k=\PP_k\Delta=-k^2+\de_y^2$$
for the projected Laplacian. 
Moreover, we use $C>0$ to indicate a constant independent of $\nu, k $ and $t$ that may vary line by line.  We also denote $\norm{\cdot}=\norm{\cdot}_{L^2}$ the usual $L^2$ norm. 

\section{Hypocoercivity Estimates}
For a $x$-periodic function $f$ we write its Fourier expansion as
\begin{equation}
f(t,x,y)=\sum_{j\in\ZZ}a_j(t,y)e^{ijx}, \qquad a_j(t,y)=\frac{1}{2\pi}\int_\TT f(t,x,y)e^{-ijx}\di x.
\end{equation}
For $k\in \NN_0$ we set 
\begin{equation}
f_k(t,x,y)=\sum_{|j|=k}a_j(t,y)e^{ijx},
\end{equation}
so that 
\begin{equation}
f(t,x,y)=\sum_{k\in\NN_0}f_k(t,x,y)
\end{equation}
can be expressed as a sum of real valued functions localized in $x$-frequency on a single band $\pm k$, $k\in\NN_0$.  We also introduce the following operators: given a function $f$ we define
\begin{equation}
\PP_0(f)=\frac{1}{2\pi}\int_\TT f(x,y)\di x, \qquad \PP_{\neq}(f)=f-\PP_0(f),
\end{equation}
and for any $k\in \NN_0$, we denote $\PP_k$ the projection to the sum of the $\pm k$-th Fourier modes in $x$.  
We start by considering the linearized system associated to \eqref{IVP Poiseuille} with initial datum $g\in L^2$, i.e. 
\begin{equation}\label{IVP linearized}
\left\{
\begin{array}{l}
\de_t \omega + y^2\de_x\omega-2\de_x\psi-\nu\Delta\omega=0 ,\\
\Delta \psi =\omega,\\
\omega|_{t=0}=g.
\end{array}
\right.
\end{equation}
The solution is given by $\omega(t)=e^{\mathcal{L}_\nu t}g$ for all $t\geq 0,$ where $\mathcal{L}_\nu$ is the linear operator \eqref{linear operator}.
To establish decay estimates for the semigroup we proceed by 
defining the following energy functional,  where $\alpha, \beta, \gamma$ have to be determined,
\begin{equation}\label{functional}
\Phi(t)=\frac{1}{2}\norm{\omega}^2+\frac{1}{2}\alpha \nu t \norm{\nabla\omega}^2+2\beta\nu t^2\langle \de_y\omega,y\de_x\omega\rangle +\frac{1}{2}\gamma \nu t^3 \left[\norm{y\de_x\omega}^2+2\norm{\nabla \de_x\psi}^2\right].
\end{equation}
This energy functional resembles the one used in \cite{CZEW2020} and differs from it by the time dependent weights. This modification is the key to achieve enhanced dissipation in the $L^2$ norm and to remove the $1+|\log\nu|$ correction obtained in \cite{CZEW2020}.
\begin{theorem}\label{thm hypo}
Fix $0<\varepsilon<\frac{1}{36}$ and define $\alpha=\varepsilon^2,\beta=\varepsilon^3, \gamma=16\varepsilon^4$. Then
\begin{equation}\label{condizioni}
\alpha<\frac{1}{2}, \quad 16\beta^2\leq \alpha\gamma, \quad 2\alpha^2<\beta<\frac{1}{4}\alpha, \quad 9\gamma<4\beta,
\end{equation}
and it follows that $\Phi(t)$ satisfies 
\begin{equation}
\Phi(t) \geq \frac{1}{2}\norm{\omega}^2+\frac{1}{4}\alpha \nu t \norm{\nabla\omega}^2 +\frac{1}{4}\gamma \nu t^3 \left[\norm{y\de_x\omega}^2+2\norm{\nabla \de_x\psi}^2\right]
\end{equation}
and
\begin{equation}
\Phi'(t)\leq -\gamma \nu t^3 \norm{\de_x\omega}^2
\end{equation}
for every $t\geq 0.$
\end{theorem}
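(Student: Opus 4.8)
The inequalities \eqref{condizioni} are checked by direct substitution: with $\alpha=\varepsilon^2$, $\beta=\varepsilon^3$, $\gamma=16\varepsilon^4$ one has $\alpha<\tfrac12$, $16\beta^2=16\varepsilon^6=\alpha\gamma$, $2\alpha^2=2\varepsilon^4<\varepsilon^3=\beta<\tfrac14\varepsilon^2=\tfrac14\alpha$ (since $\varepsilon<\tfrac14$), and $9\gamma=144\varepsilon^4<4\varepsilon^3=4\beta$ (since $\varepsilon<\tfrac1{36}$).

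For the coercivity bound, the only term of $\Phi$ lacking a sign is the mixed term $2\beta\nu t^2\langle\de_y\omega,y\de_x\omega\rangle$. I would estimate it by Cauchy--Schwarz together with $\norm{\de_y\omega}\le\norm{\nabla\omega}$, then split by Young's inequality into a multiple of $\nu t\norm{\nabla\omega}^2$ plus a multiple of $\nu t^3\norm{y\de_x\omega}^2$. Writing the product as $2\beta\,(\sqrt{\nu t}\norm{\nabla\omega})(\sqrt{\nu t^3}\norm{y\de_x\omega})$, the bound $2\beta\,XY\le\tfrac14\alpha X^2+\tfrac14\gamma Y^2$ holds for all $X,Y$ precisely when $(2\beta)^2\le4\cdot\tfrac14\alpha\cdot\tfrac14\gamma$, i.e. $16\beta^2\le\alpha\gamma$, which is the second inequality of \eqref{condizioni}. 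Subtracting this from $\Phi$ and using $\gamma\nu t^3\norm{\nabla\de_x\psi}^2\ge\tfrac12\gamma\nu t^3\norm{\nabla\de_x\psi}^2$ yields the claimed lower bound.

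For the dissipation bound I would differentiate $\Phi$ along $\de_t\omega=\mathcal L_\nu\omega$ and compute the derivative of each quadratic form by integration by parts. Two facts organise everything: the transport operator $-y^2\de_x$ and the Biot--Savart operator $2\de_x\Delta^{-1}$ are skew-adjoint on $L^2$, and the coupling between the levels of $\Phi$ is generated by the commutators $[\de_y,-y^2\de_x]=-2y\de_x$ and $[\Delta,y]=2\de_y$. Skew-adjointness gives $\frac{d}{dt}\tfrac12\norm\omega^2=-\nu\norm{\nabla\omega}^2$; the first commutator gives $\frac{d}{dt}\tfrac12\norm{\nabla\omega}^2=-2\langle\de_y\omega,y\de_x\omega\rangle-\nu\norm{\Delta\omega}^2$ and, most importantly, $\frac{d}{dt}\langle\de_y\omega,y\de_x\omega\rangle=-2\norm{y\de_x\omega}^2-4\norm{\de_x\de_y\psi}^2+\mathcal{O}(\nu)$, where the leading transport terms cancel by skew-adjointness and leave the strictly negative production term $-2\norm{y\de_x\omega}^2$. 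The factor $2$ in the last bracket of $\Phi$ is designed so that the two $\nu$-independent (transport and Biot--Savart) contributions to $\frac{d}{dt}\big[\norm{y\de_x\omega}^2+2\norm{\nabla\de_x\psi}^2\big]$ are exactly opposite and cancel; what remains is purely viscous and equals $-2\nu\norm{\de_x\omega}^2-2\nu\norm{y\nabla\de_x\omega}^2$ (using $[\Delta,y]=2\de_y$), so the weight $\tfrac12\gamma\nu t^3$ contributes the decisive negative term $-\gamma\nu^2 t^3\norm{\de_x\omega}^2$.

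The final step is to collect the terms by their time weight and absorb. The mixed contributions from $\frac{d}{dt}(\tfrac12\alpha\nu t\norm{\nabla\omega}^2)$ and from $2\beta\nu t^2\frac{d}{dt}\langle\de_y\omega,y\de_x\omega\rangle$ combine into $(4\beta-2\alpha)\nu t\langle\de_y\omega,y\de_x\omega\rangle$, whose sign and size are fixed by $\beta<\tfrac14\alpha$; I would Young-split it into a $\nu\norm{\nabla\omega}^2$ piece, absorbed by the $-(1-\tfrac\alpha2)\nu\norm{\nabla\omega}^2$ surplus from $\frac{d}{dt}\tfrac12\norm\omega^2$ (the split being admissible thanks to $\alpha<\tfrac12$ and $2\alpha^2<\beta$), and a $\nu t^2\norm{y\de_x\omega}^2$ piece. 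At the $\nu t^2$ level the negative $-4\beta\nu t^2\norm{y\de_x\omega}^2-8\beta\nu t^2\norm{\de_x\de_y\psi}^2$ produced above must dominate the positive $\tfrac32\gamma\nu t^2\big[\norm{y\de_x\omega}^2+2\norm{\nabla\de_x\psi}^2\big]$ coming from differentiating $t^3$, together with the $\norm{y\de_x\omega}^2$ remainder of the mixed term. The nontrivial ingredient is the elliptic inequality $\norm{\de_x^2\psi}^2\le C\big(\norm{y\de_x\omega}^2+\norm{\de_x\de_y\psi}^2\big)$, which I would prove mode by mode from $\Delta\psi=\omega$ by passing to the $y$-Fourier variable; with it, $9\gamma<4\beta$ (which makes $\gamma$ small relative to $\beta$) renders every $\nu t^2$ term non-positive. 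The residual $\mathcal{O}(\nu^2)$ terms form a closed family absorbed by $-\alpha\nu^2 t\norm{\Delta\omega}^2$ and $-\gamma\nu^2 t^3\norm{y\nabla\de_x\omega}^2$ via Young across the weights $t,t^2,t^3$, so that only the decisive $-\gamma\nu^2 t^3\norm{\de_x\omega}^2$ survives. I expect the main obstacle to be exactly this bookkeeping: establishing the Biot--Savart cancellation and recovering control in the single degenerate direction $y=0$ — where $\norm{y\de_x\omega}$ is blind — through $\norm{\nabla\de_x\psi}^2$ and the elliptic inequality, while verifying that the prescribed constants leave enough room for each absorption.
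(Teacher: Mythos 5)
Your proposal is correct and follows essentially the same route as the paper's own proof: the identical Cauchy--Schwarz/Young lower-bound argument keyed to $16\beta^2\le\alpha\gamma$, the same four derivative identities (the paper's Proposition 2.3, quoted from Coti Zelati--Elgindi--Widmayer), the same weight-by-weight absorption of cross terms using $\alpha<\tfrac12$, $2\alpha^2<\beta<\tfrac14\alpha$, $9\gamma<4\beta$, and the same elliptic-type inequality $\norm{\nabla\de_x\psi}^2\le\norm{y\de_x\omega}^2+3\norm{\de_{xy}\psi}^2$ (which the paper gets from the identity $\langle\de_{xy}\psi,y\de_x\omega\rangle=-\tfrac12\norm{\de_{xy}\psi}^2+\tfrac12\norm{\de_{xx}\psi}^2$, a cleaner route than your $y$-Fourier sketch) to close the $\nu t^2$ level. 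One remark: what your computation actually yields --- and what the paper's own proof yields and later uses in the iteration for Theorem 2.2 --- is $\Phi'(t)\le-\gamma\nu^2t^3\norm{\de_x\omega}^2$, so the single power of $\nu$ in the stated estimate is a typo and your $\nu^2$ is the correct conclusion.
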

Once this statement is proved, linear enhanced dissipation follows directly. 
\begin{theorem}
Let $\nu<1$ and $g\in L^2$.  There exist constants $C_0,c_0>0$ independent of $\nu$ such that
\begin{equation}
\norm{e^{\mathcal{L}_\nu t }\PP_k(g)}\leq C_0e^{-c_0\nu^\frac{1}{2}|k|^\frac{1}{2}t}\norm{\PP_k(g)},
\end{equation}
for all $t\geq 0.$ 
Combining all non-zero modes together we get
\begin{equation}\label{enhanced diss}
\norm{e^{\mathcal{L}_\nu t }\PP_{\neq}(g)}\leq C_0e^{-c_0\nu^\frac{1}{2}t}\norm{\PP_{\neq}(g)}.
\end{equation}
In addiction,  for the non-zero modes of the velocity $u_k$, we have the following decay
\begin{equation}\label{stima velocità}
\norm{\PP_k(u)(t)}^2\leq \frac{2}{\gamma \nu k^2t^3}\norm{\PP_k(g)}^2,
 \end{equation}
 for all $t\geq 0.$
\end{theorem}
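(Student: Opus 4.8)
The plan is to prove everything one $x$-Fourier band at a time and then recombine, using that the projections $\PP_k$ commute with the semigroup $e^{\mathcal{L}_\nu t}$. Fix $k\in\NN_0$ with $k\neq 0$ and write $\omega(t)=e^{\mathcal{L}_\nu t}\PP_k(g)$; then $\de_x\omega=\pm ik\,\omega$, so $\norm{\de_x\omega}^2=k^2\norm{\omega}^2$ and $\norm{\nabla\de_x\psi}^2=k^2\norm{\nabla\psi}^2=k^2\norm{\PP_k(u)}^2$. The only inputs I would use are the two conclusions of Theorem~\ref{thm hypo}: the coercivity bound, which in particular gives $\Phi(t)\ge\frac12\norm{\omega(t)}^2$ and $\Phi(t)\ge\frac14\gamma\nu t^3\cdot 2\norm{\nabla\de_x\psi}^2=\frac12\gamma\nu t^3k^2\norm{\PP_k(u)(t)}^2$, together with the dissipation inequality $\Phi'(t)\le-\gamma\nu t^3\norm{\de_x\omega}^2=-\gamma\nu t^3k^2\norm{\omega(t)}^2\le 0$. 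In particular $\Phi$ is non-increasing, so $\Phi(t)\le\Phi(0)=\tfrac12\norm{\PP_k(g)}^2$. The velocity bound \eqref{stima velocità} I would dispatch first, since it is immediate: combining $\Phi(t)\ge\frac12\gamma\nu t^3k^2\norm{\PP_k(u)(t)}^2$ with $\Phi(t)\le\tfrac12\norm{\PP_k(g)}^2$ yields a bound of exactly the form \eqref{stima velocità}, requiring no dynamical information beyond monotonicity of $\Phi$.

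For the enhanced dissipation estimate the mechanism is an iteration in time. I would first record that the bare energy is non-increasing: testing \eqref{IVP linearized} against $\omega$ and using that the transport term $y^2\de_x$ and the stream term $2\de_x\Delta^{-1}$ are skew-symmetric gives $\frac{d}{dt}\tfrac12\norm{\omega}^2=-\nu\norm{\nabla\omega}^2\le 0$, so $s\mapsto\norm{\omega(s)}$ is non-increasing. Fixing a step $T=T(\nu,k)$ and integrating the dissipation inequality on $[0,T]$, I bound the dissipation integral below by the monotonicity $\norm{\omega(s)}\ge\norm{\omega(T)}$, namely $\gamma\nu k^2\int_0^T s^3\norm{\omega(s)}^2\,ds\ge\tfrac14\gamma\nu k^2T^4\norm{\omega(T)}^2$, which combined with $\Phi(T)\ge\tfrac12\norm{\omega(T)}^2$ and $\Phi(0)=\tfrac12\norm{\PP_k(g)}^2$ gives
\[
\Bigl(\tfrac12+\tfrac14\gamma\nu k^2T^4\Bigr)\norm{\omega(T)}^2\le\tfrac12\norm{\PP_k(g)}^2 .
\]
Since $\mathcal{L}_\nu$ is autonomous the same estimate holds on every interval $[nT,(n+1)T]$ with datum $\omega(nT)$ (equivalently, Theorem~\ref{thm hypo} is applied to the time-translated solution, whose functional \eqref{functional} carries weights centred at $nT$), so $\norm{\omega((n+1)T)}^2\le\theta\,\norm{\omega(nT)}^2$ with contraction factor $\theta=(1+\tfrac12\gamma\nu k^2T^4)^{-1}$. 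Iterating produces geometric decay of $\norm{\omega(nT)}^2$, and interpolating $t\in[nT,(n+1)T)$ by the monotonicity of $\norm{\omega}$ turns this into $\norm{\omega(t)}\le C_0e^{-c_0\nu^{1/2}k^{1/2}t}\norm{\PP_k(g)}$.

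The crux, and the step I expect to be the main obstacle, is to select $T$ so that $\theta$ is a constant strictly below $1$ \emph{uniformly in $\nu$ and $k$} while the induced rate $\sim|\log\theta|/T$ lands exactly at order $\nu^{1/2}k^{1/2}$ with \emph{no logarithmic loss}. The naive balance $\gamma\nu k^2T^4\sim 1$ from the estimate above, using only $\Phi\ge\tfrac12\norm{\omega}^2$, does not by itself hit the sharp scale $T\sim\nu^{-1/2}k^{-1/2}$; reaching it is precisely where the growing weights $\nu t,\nu t^2,\nu t^3$ of \eqref{functional} must be exploited, so that the full coercivity (the control of $\norm{y\de_x\omega}^2$ and $\norm{\nabla\de_x\psi}^2$, not merely $\norm{\omega}^2$) feeds back into the $t^3$-weighted dissipation along each interval and balances against the coercivity constant of $\Phi$. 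This is the quantitative heart of removing the $(1+|\log\nu|)$ correction. Once the single-mode bound $\norm{e^{\mathcal{L}_\nu t}\PP_k(g)}\le C_0e^{-c_0\nu^{1/2}k^{1/2}t}\norm{\PP_k(g)}$ holds with $C_0,c_0$ independent of $k$, the aggregate statement \eqref{enhanced diss} follows by orthogonality of the Fourier bands: since $k^{1/2}\ge 1$ for $k\ge 1$ one has $e^{-c_0\nu^{1/2}k^{1/2}t}\le e^{-c_0\nu^{1/2}t}$, whence $\norm{e^{\mathcal{L}_\nu t}\PP_{\neq}(g)}^2=\sum_{k\ge1}\norm{e^{\mathcal{L}_\nu t}\PP_k(g)}^2\le C_0^2e^{-2c_0\nu^{1/2}t}\norm{\PP_{\neq}(g)}^2$.
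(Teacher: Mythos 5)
Your proposal reproduces the paper's own argument step for step: the velocity bound \eqref{stima velocità} from the monotonicity $\Phi(t)\le\Phi(0)=\tfrac12\norm{\PP_k(g)}^2$ combined with the $t^3$-weighted coercivity of $\Phi$; the contraction estimate obtained by integrating $\Phi'$ over $[0,T]$ and bounding the dissipation integral from below via the monotonicity of $s\mapsto\norm{\omega(s)}$ (this is exactly \eqref{norm bound}); the restart of the time-weighted functional on each interval by autonomy; and the recombination of modes by orthogonality together with $|k|^{1/2}\ge 1$. All of these steps are correct as you state them. The one step you leave open --- the choice of $T$, which you call ``the crux'' and ``the main obstacle'' --- is not an obstacle, and your diagnosis of it is wrong.

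The confusion stems from a misprint in the statement of Theorem \ref{thm hypo}. Its own proof (the bound on $I_1$) yields $\Phi'(t)\le-\gamma\nu^2t^3\norm{\de_x\omega}^2$, with $\nu^2$ and not $\nu$: the dissipation term arises as $\gamma\nu t^3\cdot\bigl(-\nu\norm{\de_x\omega}^2\bigr)$, and indeed the paper's proof of the present theorem uses $\Phi_k'(t)\le-\gamma\nu^2k^2t^3\norm{e^{\mathcal{L}_\nu t}g_k}^2$. With the correct power, your own ``naive balance'' $\gamma\nu^2k^2T^4\sim 1$ lands precisely on $T\sim\nu^{-1/2}|k|^{-1/2}$, giving $\theta\le\tfrac12$ and the rate $|\log\theta|/T\sim\nu^{1/2}|k|^{1/2}$; the proof then closes with no further input. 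In particular, no additional ``feedback'' of the coercivity in $\norm{y\de_x\omega}^2$ and $\norm{\nabla\de_x\psi}^2$ into the iteration is needed: the entire effect of the growing weights is already consumed in proving Theorem \ref{thm hypo}, and those extra coercive terms are used only for \eqref{stima velocità} and for \eqref{stima psi} (which feeds into Lemma \ref{main lemma}), not for the decay. Note, finally, that even the misprinted single power of $\nu$ would not have blocked you: the balance $\gamma\nu k^2T^4\sim 1$ gives $T\sim\nu^{-1/4}|k|^{-1/2}$, hence a decay rate $\sim\nu^{1/4}|k|^{1/2}$, which for $\nu<1$ is \emph{stronger} than the stated $\nu^{1/2}|k|^{1/2}$ bound and so implies it; overshooting a target upper bound is never an obstruction, and here it is simply the tell-tale sign of the typo. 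So: correct the power of $\nu$, fix $t_0$ by $\tfrac{\gamma}{2}\nu^2k^2t_0^4=1$, and your argument is complete and identical to the paper's.
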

We briefly give the proof. To simplify the notation we will write $f_k$ instead of $\PP_k(f)$.
\begin{proof}[Proof of Theorem 2.2]
Since the equations decouple in $k$, we can apply separately Theorem \ref{thm hypo} to the $\pm k$-th $x$-frequency and get 
\begin{equation}
\Phi_k(t)\geq \frac{1}{2}\norm{e^{\mathcal{L}_\nu t}g_k}^2
\quad \text{and} \quad
\Phi'_k(t)\leq -\gamma\nu^2 k^2 t^3\norm{e^{\mathcal{L}_\nu t}g_k}^2.
\end{equation}
Then,
\begin{align*}
\frac{1}{2}\norm{e^{\mathcal{L}_\nu t}g_k}^2&\leq \Phi_k(t) \\
&= \Phi_k(0)+\int_0^t\Phi_k'(s)\di s \\
&\leq \frac{1}{2} \norm{g_k}^2-\int_0^t\gamma\nu^2 k^2 s^3\norm{e^{\mathcal{L}_\nu s}g_k}^2\di s\\
&\leq \frac{1}{2}\norm{g_k}^2-\frac{\gamma}{4}\nu^2k^2t^4\norm{e^{\mathcal{L}_\nu t}g_k}^2.
\end{align*}
Rearranging this inequality we obtain 
\begin{equation}\label{norm bound}
\norm{e^{\mathcal{L}_\nu t}g_k}^2\leq \frac{1}{1+\frac{\gamma}{2}\nu^2k^2t^4}\norm{g_k}^2.
\end{equation}
To prove the enhanced dissipation we proceed by iteration. Fix a time $t_0$ for which  $$\frac{1}{1+\frac{\gamma}{2}\nu^2k^2t_0^4}=	\frac{1}{2}.$$ 
Now, for every time $t>t_0$, write it as $t=\lfloor t_0^{-1}t\rfloor t_0 + t^*$, with $t^* \in [0,t_0) $. Using the fact that \eqref{IVP linearized} is autonomous, we deduce from \eqref{norm bound} that for every $s\in[0,t_0]$
\begin{equation}
\norm{e^{\mathcal{L}_\nu (t_0+s)}g_k}^2\leq \frac{1}{1+\frac{\gamma}{2}\nu^2k^2t_0^4}\norm{e^{\mathcal{L}_\nu s}g_k}^2 \leq \frac{1}{2}\norm{e^{\mathcal{L}_\nu s}g_k}^2.
\end{equation}
Then, using the fact that $t\mapsto \norm{e^{\mathcal{L}_\nu t}g_k}$ is decreasing, by iteration we have
\begin{align*}
\norm{e^{\mathcal{L}_\nu t}g_k}^2&\leq  \norm{e^{\mathcal{L}_\nu\lfloor t_0^{-1}t\rfloor t_0 }g_k}^2\\
&\leq  \left(\frac{1}{2}\right)^{\lfloor t_0^{-1}t\rfloor}\norm{g_k}^2\\
&\leq C_0e^{-c_0\nu^\frac{1}{2}|k|^\frac{1}{2}t}\norm{g_k}^2.
\end{align*}
Finally,  from the monotonicity of $\Phi(t)$ we can deduce \eqref{stima velocità}. Indeed, 
\begin{equation}\label{stima psi}
\frac{1}{2}\gamma\nu t^3\norm{\de_x\nabla\psi}^2\leq \norm{g}^2,
\end{equation}
and recalling that $$\norm{u_k}^2=\norm{\nabla_k^\perp\psi_k}^2=\norm{\nabla_k\psi_k}^2,$$
we have the last inequality.

\end{proof}
In order to prove Theorem \ref{thm hypo} we state some preliminary identities that will be used to compute the derivative of the functional $\Phi$.
\begin{prop}\label{derivat}
Let $\omega$ be a solution to \eqref{IVP linearized}. Then the following holds:
\begin{equation}
\frac{1}{2}\frac{\di }{\di t}\norm{\omega}^2=-\nu\norm{\nabla \omega}^2;
\end{equation}
\begin{equation}
\frac{1}{2}\frac{\di }{\di t}\norm{\nabla\omega}^2=-\nu\norm{\Delta\omega}^2-2\langle y\de_x\omega,\de_y\omega\rangle;
\end{equation}
\begin{equation}
\frac{\di }{\di t}\langle \de_y\omega,y\de_x\omega\rangle=-2\norm{y\de_x\omega}^2-4\norm{\de_{xy}\psi}^2-2\nu\langle\Delta\omega,y\de_{xy}\omega\rangle;
\end{equation}
\begin{equation}
\frac{1}{2}\frac{\di }{\di t}\left[\norm{y\de_x\omega}^2+2\norm{\nabla\de_x\psi}^2\right]=-\nu\norm{\de_x\omega}^2-\nu\norm{y\de_x\nabla\omega}^2.
\end{equation}
\end{prop}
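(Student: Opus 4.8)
The plan is to derive all four identities from a single mechanism: differentiate the quadratic quantity in time, substitute the evolution equation in the form $\de_t\omega=-y^2\de_x\omega+2\de_x\psi+\nu\Delta\omega$ into each slot, and integrate by parts until every term is either a manifestly signed square or matches the right-hand side. Three facts are used repeatedly. First, since the $y$-coefficients are $x$-independent and the domain is periodic in $x$, any integral $\int_{\TT\times\RR}h(y)\,\de_x(q^2)$ vanishes; this kills the pure transport self-interactions. Second, the relation $\omega=\Delta\psi$ together with self-adjointness of $\Delta$ (and of $\Delta^{-1}$) lets me trade $\omega$ for $\psi$ and move Laplacians across the pairing. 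Third, I assume enough decay in $y$ that boundary terms from integration by parts in $y$ vanish.

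For the first identity I pair the equation with $\omega$. The transport term $-\langle y^2\de_x\omega,\omega\rangle$ vanishes by the periodicity remark, the stream term satisfies $\langle\de_x\psi,\Delta\psi\rangle=-\tfrac12\int\de_x|\nabla\psi|^2=0$, and the viscous term gives $\nu\langle\Delta\omega,\omega\rangle=-\nu\norm{\nabla\omega}^2$. For the second identity I use $\tfrac12\tfrac{\di}{\di t}\norm{\nabla\omega}^2=-\langle\Delta\omega,\de_t\omega\rangle$. Integrating the transport contribution $\langle\Delta\omega,y^2\de_x\omega\rangle$ by parts in $y$ leaves the cross term $-2\langle y\de_x\omega,\de_y\omega\rangle$ (the $\de_{xx}$ part and the remaining $y^2$-weighted piece drop by periodicity); the stream term vanishes since $\langle\Delta\omega,\de_x\psi\rangle=\langle\omega,\de_x\omega\rangle=0$; and the viscous term gives $-\nu\norm{\Delta\omega}^2$.

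The third identity is the laborious one and I expect it to be the main obstacle. I would split $\de_t\omega$ into its transport, stream, and viscous pieces and compute the contribution of each to $\tfrac{\di}{\di t}\langle\de_y\omega,y\de_x\omega\rangle=\langle\de_y\de_t\omega,y\de_x\omega\rangle+\langle\de_y\omega,y\de_x\de_t\omega\rangle$ separately. The transport piece, after several integrations by parts in both variables (repeatedly using $\de_{xy}\omega\,\de_x\omega=\tfrac12\de_y(\de_x\omega)^2$ and $x$-periodicity), collapses to $-2\norm{y\de_x\omega}^2$. The stream piece is the delicate one: writing $\omega=\Delta\psi$ throughout and integrating by parts, all terms reorganize, after invoking $\int\de_{xx}\psi\,\de_{yy}\psi=\norm{\de_{xy}\psi}^2$, into multiples of $\norm{\de_{xx}\psi}^2$ and $\norm{\de_{xy}\psi}^2$; the $\norm{\de_{xx}\psi}^2$ contributions cancel between the two slots, leaving exactly $-4\norm{\de_{xy}\psi}^2$. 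The viscous piece gives, symmetrically from both slots, $-2\nu\langle\Delta\omega,y\de_{xy}\omega\rangle$. The chief difficulty is sign bookkeeping across the many integrations by parts; organizing by the three pieces and discarding the periodicity-vanishing terms before expanding keeps this manageable.

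For the fourth identity I treat the two summands separately. Differentiating $\tfrac12\norm{y\de_x\omega}^2$ produces a stream term $2\langle y^2\de_x\omega,\de_{xx}\psi\rangle$ and a viscous term $\nu\langle y^2\de_x\omega,\de_x\Delta\omega\rangle$ (the transport self-interaction vanishes). For the second summand I avoid $\Delta^{-1}$ explicitly by writing $\tfrac{\di}{\di t}\norm{\nabla\de_x\psi}^2=-2\langle\de_x\omega,\de_x\de_t\psi\rangle=2\langle\de_{xx}\psi,\de_t\omega\rangle$, where the last step uses $\de_x\omega=\Delta\de_x\psi$, $\Delta\de_t\psi=\de_t\omega$, and self-adjointness of $\Delta$. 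Substituting the equation yields $-2\langle y^2\de_{xx}\psi,\de_x\omega\rangle-2\nu\norm{\de_x\omega}^2$ (the term $\langle\de_{xx}\psi,\de_x\psi\rangle$ vanishes, and the viscous part simplifies via $\langle\de_{xx}\psi,\Delta\omega\rangle=\langle\de_{xx}\omega,\omega\rangle$). The factor $2$ in front of $\norm{\nabla\de_x\psi}^2$ is chosen precisely so these stream terms cancel against $2\langle y^2\de_x\omega,\de_{xx}\psi\rangle$. What remains, $\nu\langle y^2\de_x\omega,\de_x\Delta\omega\rangle-2\nu\norm{\de_x\omega}^2$, I reduce by integrating the weighted term by parts, using $\int y^2\de_x\omega\,\de_x\Delta\omega=\norm{\de_x\omega}^2-\norm{y\de_x\nabla\omega}^2$, to obtain $-\nu\norm{\de_x\omega}^2-\nu\norm{y\de_x\nabla\omega}^2$, as claimed.
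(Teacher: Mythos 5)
Your proposal is correct, but note that the paper itself does not prove this proposition at all: it simply defers to \cite[Lemma 2.4]{CZEW2020}, so your computation supplies exactly the content that the paper leaves to an external reference. I checked your key steps and they hold. For the second identity, integrating $\langle\Delta\omega,y^2\de_x\omega\rangle$ by parts in $y$ indeed leaves only $-2\langle y\de_x\omega,\de_y\omega\rangle$ after the periodicity cancellations. For the third identity, the transport slots give $-\tfrac12\norm{y\de_x\omega}^2$ and $-\tfrac32\norm{y\de_x\omega}^2$ respectively, summing to $-2\norm{y\de_x\omega}^2$; the stream slots give $\norm{\de_{xx}\psi}^2-\norm{\de_{xy}\psi}^2$ and $-\norm{\de_{xx}\psi}^2-3\norm{\de_{xy}\psi}^2$, so the $\norm{\de_{xx}\psi}^2$ terms cancel exactly as you predict, leaving $-4\norm{\de_{xy}\psi}^2$; and each viscous slot contributes $-\nu\langle\Delta\omega,y\de_{xy}\omega\rangle$ (using $\langle\Delta\omega,\de_x\omega\rangle=0$), giving the claimed $-2\nu\langle\Delta\omega,y\de_{xy}\omega\rangle$. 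For the fourth identity, your reduction $\int y^2\de_x\omega\,\de_x\Delta\omega=\norm{\de_x\omega}^2-\norm{y\de_x\nabla\omega}^2$ is correct (the cross term $-2\int y\de_x\omega\,\de_{xy}\omega$ produces the $+\norm{\de_x\omega}^2$), and the cancellation of the stream terms $2\langle y^2\de_x\omega,\de_{xx}\psi\rangle-2\langle y^2\de_{xx}\psi,\de_x\omega\rangle=0$ is the reason for the weight $2$ on $\norm{\nabla\de_x\psi}^2$, as you observe. The only caveat worth stating explicitly is the one you already flag: all integrations by parts in $y$ require sufficient decay of $\omega$ and $\psi$ (and of the $y$-weighted quantities) at infinity, which is the standing assumption of the paper.
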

The proof of these identities can be found in \cite[Lemma 2.4]{CZEW2020}.  We proceed with the proof of Theorem \ref{thm hypo}.

\begin{proof}[Proof of Theorem \ref{thm hypo}]
For sake of clarity we recall here that $\Phi(t)$ is defined as
\begin{equation}
\Phi(t)=\frac{1}{2}\norm{\omega}^2+\frac{1}{2}\alpha \nu t \norm{\nabla\omega}^2+2\beta\nu t^2\langle \de_y\omega,y\de_x\omega\rangle +\frac{1}{2}\gamma \nu t^3 \left[\norm{y\de_x\omega}^2+2\norm{\nabla \de_x\psi}^2\right].
\end{equation}
Using the Cauchy-Schwarz inequality and the Young inequality we get
\begin{equation}
2\beta t^2|\langle \de_y\omega,y\de_x\omega\rangle|\leq 2\beta t^2\norm{\de_y\omega}\norm{y\de_x\omega}\leq \frac{\alpha t}{4}\norm{\de_y\omega}^2+\frac{4\beta^2t^3}{\alpha}\norm{y\de_x\omega}^2.
\end{equation}
Plugging this into $\Phi(t)$ we get
\begin{equation}
\Phi(t)\geq \frac{1}{2}\norm{\omega}^2+\frac{1}{4}\alpha \nu t \norm{\nabla\omega}^2+\frac{1}{2}\left(\gamma-\frac{8\beta^2}{\alpha}\right) \nu t^3 \left[\norm{y\de_x\omega}^2+2\norm{\nabla \de_x\psi}^2\right],
\end{equation}
hence the lower bound.
We now proceed by computing the derivative of the functional. By Proposition \ref{derivat} we have
\begin{align*}
\frac{\di }{\di t}\Phi(t)&=-\nu\norm{\nabla w}^2+\frac{\alpha\nu}{2}\norm{\nabla w}^2 +\alpha\nu t\left(-\nu\norm{\Delta w}^2-2\langle y\de_x,\de_yw\rangle\right)+4\beta t \nu \langle \de_yw,y\de_xw\rangle \\
&\qquad+2\beta \nu t^2\left(-2\nu\langle\Delta w,y\de_x\de_yw\rangle-2\norm{y\de_xw}^2-4\norm{\de_x\de_y\psi}^2\right)\\
&\qquad +\frac{3}{2}\gamma t^2\nu\left(\norm{y\de_xw}^2+2\norm{\nabla\de_x\psi}^2\right)+\gamma \nu t^3 \left(-\nu\norm{y\de_x\nabla w}^2-\nu\norm{\de_xw}^2\right).
\end{align*}
After rearranging all the terms we obtain
\begin{equation}
\frac{\di }{\di t}\Phi(t) = I_1+I_2+I_3+I_4+I_5,
\end{equation}
where 
\begin{align*}
 I_1&=-\frac{\nu}{2}\norm{\nabla w}^2+\alpha\nu\norm{\nabla w}^2-\gamma \nu^2t^3\norm{\de_x\omega}^2 \\
I_2&= -\alpha \nu^2t\norm{\Delta\omega}^2-4\beta\nu^2t^2\langle\Delta\omega,y\de_{xy}\omega \rangle-\gamma\nu^2t^3\norm{y\de_x\nabla\omega}^2\\
I_3&=-\frac{\nu}{2}\norm{\nabla\omega}^2 -2\alpha\nu t\langle y\de_x,\de_yw\rangle-\beta\nu t^2\norm{y\de_x\omega}^2-2\beta\nu t^2\norm{\de_{xy}\psi}^2 \\
I_4&=-\frac{\nu\alpha}{2}\norm{\nabla\omega}^2 +4\beta\nu t\langle y\de_x,\de_yw\rangle -\beta\nu t^2\norm{y\de_x\omega}^2-2\beta\nu t^2\norm{\de_{xy}\psi}^2 \\
I_5&=-2\nu\beta t^2\left[\norm{y\de_x\omega}^2+2\norm{\de_{xy}\psi}^2 \right]+\frac{3}{2}\gamma\nu t^2\left[\norm{y\de_x\omega}^2+2\norm{\nabla\de_x\psi}^2\right]
\end{align*}
Recalling now the conditions \eqref{condizioni} on the constants $\alpha,\beta,\gamma$, namely 
$$\alpha<\frac{1}{2}, \quad 16\beta^2\leq \alpha\gamma, \quad 2\alpha^2<\beta<\frac{1}{4}\alpha, \quad 9\gamma<4\beta,$$
we use again the Cauchy-Schwarz inequality and the Young inequality to get
\begin{equation}
I_1\leq -\gamma\nu^2t^3\norm{\de_x\omega}^2;
\end{equation}
\begin{equation}
I_2 \leq -\frac{1}{2}\alpha \nu^2t\norm{\Delta\omega}^2 -\left(\gamma-\frac{8\beta^2}{\alpha}\right)\nu^2t^3\norm{y\nabla\de_x\omega}^2 <0;
\end{equation}
\begin{equation}
I_3 \leq -\frac{\nu}{4}\norm{\nabla\omega}^2 -(\beta-2\alpha^2)\nu t^2\norm{y\de_x\omega}^2-2\beta\nu t^2\norm{\de_{xy}\psi}^2 <0;
\end{equation}
\begin{equation}
I_4\leq -\frac{\nu}{2}(\alpha-4\beta)\norm{\nabla\omega}^2 -\frac{\beta}{2}\nu t^2\norm{y\de_x\omega}^2-2\beta\nu t^2\norm{\de_{xy}\psi}^2<0.
\end{equation}
Moreover,  from
\begin{equation}
 \langle \de_{xy}\psi,y\de_x\omega\rangle=-\frac{1}{2}\norm{\de_{xy}\psi}^2+\frac{1}{2}\norm{\de_{xx}\psi}^2
 \end{equation} 
we can deduce that
\begin{equation}
\norm{\nabla\de_x\psi}^2\leq \norm{y\de_x\omega}^2+3\norm{\de_{xy}\psi}^2,
\end{equation}
hence
\begin{equation}
I_5 \leq -\left(2\beta-\frac{9}{2}\gamma\right)\nu t^2\left[\norm{y\de_x\omega}^2+2\norm{\nabla\de_x\psi}^2\right]\leq 0.
\end{equation}
Combining all together we get the upper bound on $\Phi'(t)$.
\end{proof}
\section{Additional Linear Estimates}
In this section we give some estimates on the semigroup generated by the linearized operator $\mathcal{L}_\nu.$ These estimates play a crucial role in the estimates for the nonlinear term in the full perturbed system.
\begin{lemma}\label{main lemma}
Let $g\in L^2$ such that $\PP_0(g)=0$. The following estimates hold for every $T>0$:
\begin{equation}\label{primastima}
\int_0^T\norm{\nabla (e^{\mathcal{L}_\nu t}g)}^2\leq \frac{1}{2}\nu^{-1}\norm{g}^2;
\end{equation}
\begin{equation}\label{secondastima}
\int_0^T\norm{\de_x(e^{\mathcal{L}_\nu t}g)}\leq C \nu^{-2/3}\norm{g};
\end{equation}
\begin{equation}\label{terzastima}
\int_0^T\norm{\nabla\Delta^{-1}(e^{\mathcal{L}_\nu t}\PP_{\neq}(g))}^2_{L^\infty}\leq C(|\log\nu|+1)\nu^{-1/3}\norm{\PP_{\neq}(g)}^2.
\end{equation}

\end{lemma}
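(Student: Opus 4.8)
The three bounds are of increasing difficulty, and I would establish them separately, feeding in the outputs of Section 2. Estimate \eqref{primastima} is immediate from the basic energy balance: writing $\omega(t)=e^{\mathcal{L}_\nu t}g$ and integrating the first identity of Proposition \ref{derivat}, $\frac{1}{2}\frac{\di}{\di t}\norm{\omega}^2=-\nu\norm{\nabla\omega}^2$, over $[0,T]$ gives $\frac{1}{2}\norm{\omega(T)}^2+\nu\int_0^T\norm{\nabla\omega}^2=\frac{1}{2}\norm{g}^2$, whence $\int_0^T\norm{\nabla\omega}^2\leq\frac{1}{2}\nu^{-1}\norm{g}^2$. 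No enhanced dissipation is needed here.

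For \eqref{secondastima} I would split the time integral at a threshold $\tau$ and combine two complementary controls on $\norm{\de_x\omega}$. On $[0,\tau]$, Cauchy--Schwarz in time together with \eqref{primastima} and $\norm{\de_x\omega}\leq\norm{\nabla\omega}$ yields $\int_0^\tau\norm{\de_x\omega}\leq\tau^{1/2}(\int_0^\tau\norm{\de_x\omega}^2)^{1/2}\leq C\tau^{1/2}\nu^{-1/2}\norm{g}$. On $[\tau,\infty)$ I would use the weighted dissipation already present in $\Phi$: since $\Phi$ is nonincreasing with $\Phi(0)=\frac{1}{2}\norm{g}^2$ and, from the computation in the proof of Theorem \ref{thm hypo}, $\Phi'(t)\leq-\gamma\nu^2t^3\norm{\de_x\omega}^2$, integration gives $\int_0^\infty t^3\norm{\de_x\omega}^2\leq\frac{1}{2\gamma}\nu^{-2}\norm{g}^2$; pairing $t^{3/2}$ against $t^{-3/2}$ via Cauchy--Schwarz then yields $\int_\tau^\infty\norm{\de_x\omega}\leq C\tau^{-1}\nu^{-1}\norm{g}$. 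Minimising $C\tau^{1/2}\nu^{-1/2}+C\tau^{-1}\nu^{-1}$ over $\tau$ selects $\tau\sim\nu^{-1/3}$ and returns the claimed $C\nu^{-2/3}\norm{g}$.

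The bound \eqref{terzastima} is the delicate one, and is where I expect the main work. Recalling $\PP_0(g)=0$, I work mode by mode with $\psi_k=\Delta_k^{-1}\omega_k$ and $u_k=\nabla_k^\perp\psi_k$. The first step is the pointwise-in-time estimate $\norm{\nabla\psi_k}^2_{L^\infty}\lesssim\norm{u_k}\norm{\omega_k}$: since $\psi_k$ carries the single $x$-frequency $\pm k$, its $L^\infty_{x,y}$ norm is bounded by the $L^\infty_y$ norm of the $y$-profile, to which I apply the one-dimensional Agmon inequality $\norm{f}^2_{L^\infty(\RR)}\leq2\norm{f}_{L^2}\norm{f'}_{L^2}$; together with the elliptic relations $\norm{\psi_k}\leq|k|^{-2}\norm{\omega_k}$, $\norm{\de_y\psi_k}\leq\norm{u_k}$ and $\norm{\de_y^2\psi_k}\leq2\norm{\omega_k}$ (all following from $\Delta_k\psi_k=\omega_k$) this gives the stated product bound. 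The second step is the single-mode time integral $\int_0^\infty\norm{\nabla\psi_k}^2_{L^\infty}\lesssim\int_0^\infty\norm{u_k}\norm{\omega_k}$, into which I feed the two decay rates from Section 2: the velocity decay \eqref{stima velocità}, $\norm{u_k}^2\leq\frac{2}{\gamma\nu k^2t^3}\norm{g_k}^2$, supplemented by $\norm{u_k}\leq|k|^{-1}\norm{\omega_k}$ at small times, and the enhanced dissipation $\norm{\omega_k}\leq C_0e^{-c_0\nu^{1/2}|k|^{1/2}t}\norm{g_k}$. The two velocity controls cross at $t_\ast\sim\nu^{-1/3}$, which is exactly where the exponent $\nu^{-1/3}$ is born; this produces $\int_0^\infty\norm{\nabla\psi_k}^2_{L^\infty}\lesssim\nu^{-1/3}|k|^{-1}\norm{g_k}^2$ for $1\leq|k|\leq\nu^{-1/3}$ and $\lesssim\nu^{-1/2}|k|^{-3/2}\norm{g_k}^2$ for $|k|>\nu^{-1/3}$ (the latter by using enhanced dissipation in both factors).

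It remains to recombine the modes. From $\norm{\nabla\psi}_{L^\infty}\leq\sum_k\norm{\nabla\psi_k}_{L^\infty}$ and Cauchy--Schwarz in time, $\int_0^T\norm{\nabla\psi}^2_{L^\infty}\leq(\sum_k b_k)^2$ with $b_k=(\int_0^T\norm{\nabla\psi_k}^2_{L^\infty})^{1/2}$; a second Cauchy--Schwarz in $k$ gives $(\sum_k b_k)^2\leq(\sum_k b_k^2\norm{g_k}^{-2})\norm{g}^2$. The weighted sum splits into a low-frequency part $\nu^{-1/3}\sum_{1\leq|k|\leq\nu^{-1/3}}|k|^{-1}\sim\nu^{-1/3}|\log\nu|$---this harmonic sum, truncated at the enhanced-dissipation scale $|k|\sim\nu^{-1/3}$, is precisely the origin of the logarithm---and a convergent high-frequency part $\nu^{-1/2}\sum_{|k|>\nu^{-1/3}}|k|^{-3/2}\sim\nu^{-1/3}$, which together give $C(|\log\nu|+1)\nu^{-1/3}\norm{g}^2$. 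The principal obstacle is this final bookkeeping: choosing the Agmon bound in the multiplicative form $\norm{u_k}\norm{\omega_k}$ so that velocity decay and enhanced dissipation enter on an equal footing, locating the correct frequency cutoff, and executing the two summations so that both the logarithm and the exponent $\nu^{-1/3}$ emerge sharply.
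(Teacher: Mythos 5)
Your proof is correct, and its overall architecture coincides with the paper's: the energy identity for \eqref{primastima}; a crossover at the enhanced-dissipation time $t\sim\nu^{-1/3}$ for \eqref{secondastima}; and, for \eqref{terzastima}, the mode-by-mode Agmon-type bound $\norm{\nabla_k\Delta_k^{-1}\omega_k}_{L^\infty}^2\lesssim\norm{\nabla_k\Delta_k^{-1}\omega_k}\norm{\omega_k}$ (your $\norm{u_k}\norm{\omega_k}$ is the same quantity) fed by the monotonicity of $\Phi$, followed by Minkowski in time, Cauchy--Schwarz in $k$, the frequency cutoff $|k|\sim\nu^{-1/3}$, and the harmonic sum producing the logarithm. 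You deviate in two sub-steps, and both deviations are valid. For \eqref{secondastima}, the paper argues pointwise in time: for each $k\neq0$ it combines the heat bound $k^2e^{-2\nu k^2t}$ with the enhanced-dissipation bound $k^2e^{-c\nu^{1/2}|k|^{1/2}t}$ and the boundedness of $x^ne^{-x}$ to get $\norm{\de_x e^{\mathcal{L}_\nu t}g}\leq C\min\{(\nu t)^{-1/2},(\nu t^2)^{-1}\}\norm{g}$, then integrates; you instead apply Cauchy--Schwarz in time against the two integrated quantities $\int_0^T\norm{\nabla\omega}^2\lesssim\nu^{-1}\norm{g}^2$ and $\int_0^T t^3\norm{\de_x\omega}^2\lesssim\nu^{-2}\norm{g}^2$, the latter read off directly from $\Phi'\leq-\gamma\nu^2t^3\norm{\de_x\omega}^2$ (you correctly use the $\nu^2$ from the proof of Theorem \ref{thm hypo}, not the $\nu$ in its statement, which is a typo). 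Your route uses the time weights of $\Phi$ directly and never needs the exponential form of the decay; the paper's route yields a pointwise-in-time decay of $\norm{\de_x e^{\mathcal{L}_\nu t}g}$, which is of independent interest. For the high frequencies $|k|>\nu^{-1/3}$ in \eqref{terzastima}, the paper uses the interpolation \eqref{seconda stima grad psi}, $\norm{\nabla_k\Delta_k^{-1}\omega_k}_{L^\infty}^2\lesssim\norm{\nabla_k\Delta_k^{-1}\omega_k}^{3/2}\norm{\nabla_k\omega_k}^{1/2}$, together with H\"older in time and \eqref{primastima}; you instead insert the semigroup decay of Theorem 2.2 into both factors, $\norm{u_k}\norm{\omega_k}\leq|k|^{-1}\norm{\omega_k}^2\leq C|k|^{-1}e^{-2c_0\nu^{1/2}|k|^{1/2}t}\norm{g_k}^2$, and integrate the exponential to obtain the same weight $|k|^{-3/2}\nu^{-1/2}$. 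Your version is slightly more elementary (no interpolation inequality, no H\"older in time), at the mild price of invoking the full exponential decay where the paper only needs algebraic-in-time information; either way the final bookkeeping, and hence the constant $C(|\log\nu|+1)\nu^{-1/3}$, is identical.
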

\begin{proof}
Throughout the proof, $T$ will be any positive time.  
The first estimate follows directly from the energy inequality for the linearized problem
\begin{equation}
\norm{e^{\mathcal{L}_\nu t}g}^2+2\nu\int_0^t\norm{\nabla (e^{\mathcal{L}_\nu t}g)}^2\leq \norm{g}^2
\end{equation}
and it hold for any $g\in L^2$.
For the second estimate we note that,  for any $k\neq 0$,
\begin{equation}
\left\{
\begin{array}{l}
\norm{\de_x(e^{\mathcal{L}_\nu t}g)}^2=k^2\norm{e^{\mathcal{L}_\nu t}g}^2\leq k^2e^{-2\nu k^2t}\norm{g}^2,\\
\norm{\de_x(e^{\mathcal{L}_\nu t}g)}^2=k^2\norm{e^{\mathcal{L}_\nu t}g}^2\leq k^2e^{-\gamma |k|^\frac{1}{2}\nu^\frac{1}{2} t}\norm{g}^2.\\
\end{array}
\right.
\end{equation}
Using the fact that the function $x^ne^{-x}$ is bounded for $x\geq0$ for every $n\in \NN$,  we have
\begin{equation}
\left\{
\begin{array}{l}
k^2\norm{e^{\mathcal{L}_\nu t}g}^2\leq \frac{C}{\nu t}\norm{g}^2,\\
k^2\norm{e^{\mathcal{L}_\nu t}g}^2\leq \frac{C}{\nu^2t^4}\norm{g}^2\,\
\end{array}
\right.
\end{equation}
and hence 
\begin{equation}
\norm{\de_x(e^{\mathcal{L}_\nu t}g)}=|k|\norm{e^{\mathcal{L}_\nu t}g}\leq C \min\left\{\frac{1}{\sqrt{\nu t}},\frac{1}{\nu t^2}\right\}\norm{g}.
\end{equation}
We have that
\begin{align}
\min\left\{\frac{1}{\sqrt{\nu t}},\frac{1}{\nu t^2}\right\} =\left\{
\begin{aligned}
(\nu t)^{-\frac{1}{2}} \quad \text{for } t\leq \nu^{-\frac{1}{3}},\\
(\nu t^2)^{-1} \quad \text{for } t\geq \nu^{-\frac{1}{3}},\\
\end{aligned}
\right.
\end{align} 
and so
\begin{align*}
\int_0^T\norm{\de_x(e^{\mathcal{L}_\nu t}g)}&\leq C\left(\int_0^{\nu^{-\frac{1}{3}}}\frac{1}{\sqrt{\nu t}}\di t + \int_{\nu^{-\frac{1}{3}}}^T\frac{1}{\nu t^2}\di t\right)\norm{g}\\
&\leq C \left(\nu^{-\frac{1}{2}}\nu^{-\frac{1}{6}}+\nu^{-1}\nu^\frac{1}{3}\right)\norm{g}\\
&\leq C \nu^{-\frac{2}{3}}\norm{g}.
\end{align*}
For the third estimate, we apply the Minkowski inequality and we reduce to 
\begin{equation}\label{after minkowski}
\int_0^T\norm{\nabla\Delta^{-1}(e^{\mathcal{L}_\nu t}g)}^2_{L^\infty}\leq \left(\sum_{k>0} \left(\int_0^T\norm{\nabla_k\Delta_k^{-1}(e^{\mathcal{L}_\nu t}g)_k}_{L^\infty}^2\di t\right)^\frac{1}{2}\right)^2.
\end{equation}
Consider  $\norm{\nabla_k\Delta_k^{-1}(e^{\mathcal{L}_\nu t}g)_k}_{L^\infty}^2$ for $k\neq 0$.  Using the one dimensional Gagliardo-Niremberg-Sobolev inequality we have
\begin{equation}\label{prima stima grad psi}
\norm{\nabla_k\Delta_k^{-1}(e^{\mathcal{L}_\nu t}g)_k}_{L^\infty}^2\leq C \norm{\nabla_k\Delta_k^{-1}(e^{\mathcal{L}_\nu t}g)_k}\norm{(e^{\mathcal{L}_\nu t}g)_k}.
\end{equation}
Moreover, interpolating the $L^2$ norm we get
\begin{equation}\label{seconda stima grad psi}
\norm{\nabla_k\Delta_k^{-1}(e^{\mathcal{L}_\nu t}g)_k}_{L^\infty}^2\leq C  \norm{\nabla_k\Delta_k^{-1}(e^{\mathcal{L}_\nu t}g)_k}^\frac{3}{2}\norm{\nabla_k(e^{\mathcal{L}_\nu t}g)_k}^\frac{1}{2}.
\end{equation}
Combining \eqref{prima stima grad psi}  with \eqref{stima psi} we deduce 
\begin{equation}
\left\{
\begin{array}{l}
\norm{\nabla_k\Delta_k^{-1}(e^{\mathcal{L}_\nu t}g)_k}_{L^\infty}^2\leq (\nu k^2t^3)^{-\frac{1}{2}}\norm{g_k}^2,\\
\norm{\nabla_k\Delta_k^{-1}(e^{\mathcal{L}_\nu t}g)_k}_{L^\infty}^2\leq |k|^{-1}\norm{g_k}^2,\\
\end{array}
\right.
\end{equation}
where the second inequality follows from $|k|\norm{\nabla_k\Delta_k^{-1}(e^{\mathcal{L}_\nu t}g)_k}\leq\norm{(e^{\mathcal{L}_\nu t}g)_k}\leq\norm{g_k}$.
Hence, we deduce that
\begin{equation}\label{stima combinata psi}
\norm{\nabla_k\Delta_k^{-1}(e^{\mathcal{L}_\nu t}g)_k}_{L^\infty}^2\leq C|k|^{-1}\min\{1,(\nu t^3)^{-\frac{1}{2}}\}\norm{g_k}^2.
\end{equation}
As before,  using \eqref{stima combinata psi} we have
\begin{align*}
\int_0^T\norm{\nabla_k\Delta_k^{-1}(e^{\mathcal{L}_\nu t}g)_k}^2_{L^\infty}&\leq C|k|^{-1}\left( \int_0^T\min\{1,(\nu t^3)^{-\frac{1}{2}}\}\di t \right)\norm{g_k}^2\\
& \leq C |k|^{-1}\left(\nu^{-\frac{1}{3}}+ \int_{\nu^{-\frac{1}{3}}}^T(\nu t^3)^{-\frac{1}{2}}\di t\right)\norm{g_k}^2\\
&\leq C |k|^{-1} \left(\nu^{-\frac{1}{3}}+\nu^{-\frac{1}{2}}\nu^\frac{1}{6}\right)\norm{g_k}^2\\
&\leq C|k|^{-1} \nu^{-\frac{1}{3}}\norm{g_k}^2.
\end{align*}
Using \eqref{seconda stima grad psi} and the H\"older inequality we obtain
\begin{align*}
\int_0^T\norm{\nabla_k\Delta_k^{-1}(e^{\mathcal{L}_\nu t}g)_k}^2_{L^\infty}&\leq C\int_0^T \norm{\nabla_k\Delta_k^{-1}(e^{\mathcal{L}_\nu t}g)_k}^\frac{3}{2}\norm{\nabla_k(e^{\mathcal{L}_\nu t}g)_k}^\frac{1}{2}\\
& \leq C \left(\int_0^T \norm{\nabla_k\Delta_k^{-1}(e^{\mathcal{L}_\nu t}g)_k}^2\right)^\frac{3}{4} \left(\int_0^T\norm{\nabla_k(e^{\mathcal{L}_\nu t}g)_k}^2\right)^\frac{1}{4}\\
&\leq C \left(\int_0^T |k|^{-2}\min\{1,(\nu t^3)^{-1}\}\norm{g_k}^2\di t\right)^\frac{3}{4} \nu^{-\frac{1}{4}}\norm{g_k}^\frac{1}{2}\\
&\leq C|k|^{-\frac{3}{2}} \nu^{-\frac{1}{2}}\norm{g_k}^2.
\end{align*}
So, from \eqref{after minkowski} and using the Cauchy-Schwarz inequality we deduce 
\begin{align*}
\int_0^T\norm{\nabla\Delta^{-1}(e^{\mathcal{L}_\nu t}g)}_{L^\infty}^2&\leq C \left(\sum_{k>0}\left(\int_0^T\norm{\nabla_k\Delta_k^{-1}(e^{\mathcal{L}_\nu t}g)_k}_{L^\infty}^2\right)^\frac{1}{2}\right)^2 \\
&\leq C \left(\sum_{0<k<\nu^{-\frac{1}{3}}}\left(k^{-1}\nu^{-\frac{1}{3}}\norm{g_k}^2\right)^\frac{1}{2}+\sum_{k>\nu^{-\frac{1}{3}}}\left(k^{-\frac{3}{2}}\nu^{-\frac{1}{2}}\norm{g_k}^2\right)^\frac{1}{2}\right)^2\\
&\leq C \left( \nu^{-\frac{1}{3}}\sum_{0<k<\nu^{-\frac{1}{3}}}k^{-1}+\nu^{-\frac{1}{2}}\sum_{k>\nu^{-\frac{1}{3}}}k^{-\frac{3}{2}}\right)\sum_{k\neq0}\norm{g_k}^2\\
&\leq C  \left( \nu^{-\frac{1}{3}}|\log\nu|+\nu^{-\frac{1}{2}}\nu^\frac{1}{6}\right)\norm{g}^2\\
&\leq C  \nu^{-\frac{1}{3}}(1+|\log\nu|)\norm{g}^2.
\end{align*}
\end{proof}
\begin{rmk}
Regarding the third estimate, note that using only \eqref{prima stima grad psi} leads to the series $\sum_{k>0} \frac{1}{k}$, which is clearly not convergent.  The logarithmic correction term arises when we use \eqref{seconda stima grad psi} to overcome this problem.
\end{rmk}
\section{Nonlinear Transition Threshold}
Before proving the main result we start by considering the nonlinear problem \eqref{IVP Poiseuille}.  We decompose $\omega=\omega_s+\tilde{\omega}$, as well as $\psi=\psi_s+\tilde{\psi}$  and $u=u_s+\tilde{u}$, where $$\omega_s=\PP_0\omega=\frac{1}{2\pi}\int_{\TT}\omega\di x$$ 
is the shear part of the flow while
$$\quad \tilde{\omega}=\PP_{\neq}\omega=(1-\PP_0)\omega$$ is the non shear part.  It follows that $\omega_s$ and $\tilde{\omega}$ satisfy
\begin{align}
\left\{
\begin{aligned}
&\de_t\omega_s-\nu\de_y^2\omega_s=-\PP_0(\tilde{u}\cdot\nabla\tilde{\omega}),\\
&\de_t \tilde{\omega} + y^2\de_x\tilde{\omega}-2\de_x\tilde{\psi}-\nu\Delta\tilde{\omega}=-\PP_{\neq}(u_s\de_x\tilde{\omega}+\tilde{u}\cdot \nabla \omega).
\end{aligned}
\right.
\end{align}
Before proving Theorem \ref{transition thm}, we consider the following equation
\begin{equation}
\de_t \tilde{\omega} -\mathcal{L}_\nu \tilde{\omega}=f,
\end{equation}
where $\mathcal{L}_\nu$ is the linearized operator defined in \eqref{linear operator} and $f$ is a time-dependent function.  Using the Duhamel formula we can write the solution for every $t\geq 0$ as 
\begin{equation}
\tilde{\omega}(t)=e^{t\mathcal{L}_\nu}\tilde{\omega}(0)+e^{t\mathcal{L}_\nu}\int_0^te^{-s\mathcal{L}_\nu}f(s)\di s.
\end{equation} 
Using Lemma \ref{main lemma} we obtain similar estimates for the solution $\tilde{\omega}(t)$ for any arbitrary $T>0$.  We consider firstly 
\begin{equation}
\int_0^T\norm{\de_x\tilde{\omega}}\di t\leq \int_0^T\norm{\de_xe^{t\mathcal{L}_\nu}\tilde{\omega}(0)}\di t+\int_0^T\norm{\de_xe^{t\mathcal{L}_\nu}\int_0^te^{-s\mathcal{L}_\nu}f(s)\di s}\di t.
\end{equation}
The first term on the right hand side can be bounded using \eqref{secondastima} as 
\begin{equation}
\int_0^T\norm{\de_xe^{t\mathcal{L}_\nu}\tilde{\omega}(0)}\di t\leq C\nu^{-2/3}\norm{\tilde{\omega}(0)}
\end{equation}
while for the second term we use the Minkowski integral inequality and estimate \eqref{secondastima} again as follows
\begin{align}
\begin{aligned}
\int_0^T\norm{\de_xe^{t\mathcal{L}_\nu}\int_0^te^{-s\mathcal{L}_\nu}f(s)\di s}\di t &= \int_0^T\norm{\int_0^t\de_xe^{(t-s)\mathcal{L}_\nu}f(s)\di s}\di t \\
&\leq \int_0^T\int_0^T\norm{\de_xe^{(t-s)\mathcal{L}_\nu}f(s)}\di t\di s \\
&\leq C\nu^{-2/3}\int_0^T \norm{f(s)}\di s.
\end{aligned}
\end{align}
Combining the two estimates we have
\begin{equation}
\nu^{2/3}\int_0^T\norm{\de_x\tilde{\omega}}\leq C\left( \norm{\tilde{\omega}(0)}+\int_0^T\norm{f(s)}\di s\right).
\end{equation}
Analogously, using estimate \eqref{terzastima} we have 
\begin{align}
\begin{aligned}
\left(\int_0^T\norm{\nabla\Delta^{-1}\tilde{\omega}(t)}_{L^\infty}^2\di t\right)^{1/2} & \leq \left(\int_0^T\norm{\nabla\Delta^{-1} e^{t\mathcal{L}_\nu}\tilde{\omega}(0)}^2\di t\right)^{1/2}\\
&\quad +\left(\int_0^T\norm{\nabla\Delta^{-1} e^{t\mathcal{L}_\nu}\int_0^te^{-s\mathcal{L}_\nu}f(s)\di s}\di t\right)^{1/2}\\
&\leq C(|\log\nu|+1)^{1/2}\nu^{-1/6}\left( \norm{\tilde{\omega}(0)}+\int_0^T\norm{f(s)}\di s\right)
\end{aligned}
\end{align}
and we deduce 
\begin{align}\label{ineq nonlinear}
\begin{aligned}
\nu^{2/3}\int_0^T\norm{\de_x\tilde{\omega}}\di t+(|\log\nu|+1)^{-1/2}\nu^{1/6}\left(\int_0^T\norm{\nabla\Delta^{-1}\tilde{\omega}(t)}_{L^\infty}^2\di t\right)^{1/2}&\\ \leq C \norm{\tilde{\omega}(0)}+&C\int_0^T\norm{f(s)}\di s.
\end{aligned}
\end{align}
We are ready to prove the nonlinear threshold theorem.
\begin{theorem}\label{transition thm}
There exists constants $\varepsilon_0\in (0,1), C_1>0,c_1>0$ such that for all $\nu<1$ for every $\omega_{in}\in L^2$ with $$\norm{\omega_{in}}\leq \varepsilon_0(1+|\log\nu|^\frac{1}{2})^{-1}\nu^{2/3},$$ the solution of \eqref{IVP Poiseuille} is global in time with the bound $$\norm{\tilde{\omega}} \leq C_1e^{-c_1\nu^{1/2}t}\norm{\tilde{\omega}_{in}}.$$
\end{theorem}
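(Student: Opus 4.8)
The plan is to run a continuity (bootstrap) argument controlling the non-shear vorticity $\tilde\omega$ and the shear vorticity $\omega_s$ simultaneously. Writing the $\tilde\omega$-equation as $\de_t\tilde\omega-\mathcal{L}_\nu\tilde\omega=f$ with forcing $f=-\PP_{\neq}(u_s\de_x\tilde\omega+\tilde u\cdot\nabla\omega)$, I would combine Duhamel's formula with the linear enhanced dissipation \eqref{enhanced diss} to obtain
\begin{equation*}
\norm{\tilde\omega(t)}\le C_0 e^{-c_0\nu^{1/2}t}\norm{\tilde\omega_{in}}+C_0\int_0^t e^{-c_0\nu^{1/2}(t-s)}\norm{f(s)}\,ds,
\end{equation*}
and, in parallel, use the packaged estimate \eqref{ineq nonlinear} to control the two integral quantities $\mathcal A:=\nu^{2/3}\int_0^T\norm{\de_x\tilde\omega}\,dt$ and $\mathcal B:=(1+|\log\nu|)^{-1/2}\nu^{1/6}\big(\int_0^T\norm{\nabla\Delta^{-1}\tilde\omega}_{L^\infty}^2\,dt\big)^{1/2}$. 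Since $\nabla\Delta^{-1}\tilde\omega$ is, up to rotation, the non-shear velocity $\tilde u$, the quantity $\mathcal B$ is precisely what controls $\int_0^T\norm{\tilde u}_{L^\infty}^2\,dt$. The bootstrap hypotheses would assert that $\norm{\tilde\omega(t)}$ decays like $e^{-c_1\nu^{1/2}t}\norm{\tilde\omega_{in}}$ with a doubled constant, that $\mathcal A,\mathcal B\le 2C\norm{\tilde\omega_{in}}$, and that the shear part stays of size $\lesssim\norm{\omega_{in}}$, the goal being to recover the same bounds with the original constants.

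The core is estimating $\int_0^T\norm{f}\,ds$, for which I would split $f$ via $\omega=\omega_s+\tilde\omega$, $u=u_s+\tilde u$ into the three interactions $u_{s,1}\de_x\tilde\omega$, $\tilde u_2\,\de_y\omega_s$ and $\tilde u\cdot\nabla\tilde\omega$. The self-interaction $\tilde u\cdot\nabla\tilde\omega$ I would bound by $\big(\int_0^T\norm{\tilde u}_{L^\infty}^2\big)^{1/2}\big(\int_0^T\norm{\nabla\tilde\omega}^2\big)^{1/2}$, where the first factor is $\mathcal B$-controlled and the second is $\lesssim\nu^{-1/2}\norm{\tilde\omega_{in}}$ by the energy estimate behind \eqref{primastima}; inserting this into \eqref{ineq nonlinear} produces a factor $(1+|\log\nu|)^{1/2}\nu^{-2/3}\norm{\tilde\omega_{in}}$ multiplying $\mathcal B$, and the requirement that it be $<\tfrac12$ is exactly the threshold $\norm{\omega_{in}}\lesssim(1+|\log\nu|^{1/2})^{-1}\nu^{2/3}$. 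The term $\tilde u_2\de_y\omega_s$ I would treat by $\big(\int\norm{\tilde u}_{L^\infty}^2\big)^{1/2}\big(\int\norm{\de_y\omega_s}^2\big)^{1/2}$, and $u_{s,1}\de_x\tilde\omega$ by $\big(\sup_t\norm{u_{s,1}}_{L^\infty}\big)\int_0^T\norm{\de_x\tilde\omega}=\nu^{-2/3}\big(\sup_t\norm{u_{s,1}}_{L^\infty}\big)\mathcal A$; both are absorbed once the shear quantities $\int\norm{\de_y\omega_s}^2$ and $\sup_t\norm{u_{s,1}}_{L^\infty}$ are shown to be small, again under the same threshold.

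Controlling the shear part is the main obstacle, since the zero mode enjoys no enhanced dissipation, only the slow heat decay. Here I would exploit that the forcing of the $\omega_s$-equation is the divergence-form term $-\de_y\PP_0(\tilde u_2\tilde\omega)$, run the $L^2$ energy estimate
\begin{equation*}
\norm{\omega_s(T)}^2+\nu\int_0^T\norm{\de_y\omega_s}^2\le\norm{\omega_{s,in}}^2+\nu^{-1}\big(\sup_t\norm{\tilde\omega}^2\big)\int_0^T\norm{\tilde u}_{L^\infty}^2,
\end{equation*}
and verify that at the threshold the last term is $\lesssim\norm{\omega_{in}}^2$, so that $\norm{\omega_s}\lesssim\norm{\omega_{in}}$ and $\big(\int\norm{\de_y\omega_s}^2\big)^{1/2}\lesssim\nu^{-1/2}\norm{\omega_{in}}$. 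The genuinely delicate point is the pointwise bound on the shear velocity $u_{s,1}=-\de_y\psi_s$ needed for $u_{s,1}\de_x\tilde\omega$: on the unbounded strip $\TT\times\RR$ the map $\omega_s\mapsto u_{s,1}$ amounts to $-\de_y^{-1}\omega_s$ on the zero mode, which is singular at low frequency, so $\norm{u_{s,1}}_{L^\infty}$ cannot be read off from $\norm{\omega_s}$ alone. I expect to control it through an Agmon-type inequality $\norm{u_{s,1}}_{L^\infty}^2\lesssim\norm{u_{s,1}}\norm{\omega_s}$ together with a separate bound on $\norm{u_{s,1}}$, and this low-frequency bookkeeping is where the argument must be carried out most carefully.

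Finally, to upgrade uniform-in-time smallness to the stated exponential decay, I would re-insert the now-controlled $\norm{f(s)}$, which is a product of the decaying non-shear factors and hence itself decays at rate $\gtrsim\nu^{1/2}$, into a weighted version of the Duhamel inequality, choosing $c_1<c_0$ so that $\sup_t e^{c_1\nu^{1/2}t}\norm{\tilde\omega(t)}$ stays bounded by $C_1\norm{\tilde\omega_{in}}$. Closing every bootstrap bound with strict improvement then yields, by continuity, global existence together with $\norm{\tilde\omega(t)}\le C_1 e^{-c_1\nu^{1/2}t}\norm{\tilde\omega_{in}}$.
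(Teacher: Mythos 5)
Your overall strategy is the one the paper follows: Duhamel's formula with the enhanced-dissipation bound \eqref{enhanced diss}, the packaged space-time estimate \eqref{ineq nonlinear} controlling the quantity $A(t)$ (your $\mathcal A+\mathcal B$), absorption of the nonlinear forcing under the stated threshold, and an iteration converting uniform control into exponential decay. There are two executional differences worth noting. First, the paper does \emph{not} split the nonlinearity into three interactions: it keeps $\tilde u\cdot\nabla\omega$ whole and bounds $\int_0^t\norm{\tilde u\cdot\nabla\omega}\le\bigl(\int_0^t\norm{\tilde u}_{L^\infty}^2\bigr)^{1/2}\bigl(\int_0^t\norm{\nabla\omega}^2\bigr)^{1/2}$, using the full nonlinear energy identity $\frac12\frac{\di}{\di t}\norm{\omega}^2=-\nu\norm{\nabla\omega}^2$ (the transport term and the stretching term $\langle 2\de_x\psi,\omega\rangle$ both vanish), which gives $\int_0^\infty\norm{\nabla\omega}^2\le\frac{1}{2\nu}\norm{\omega_{in}}^2$ and therefore controls your $\tilde u_2\,\de_y\omega_s$ interaction for free; your separate divergence-form energy estimate for the shear equation is correct but unnecessary. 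Second, your closing step via a weighted Duhamel inequality has a soft spot: $\norm{f(s)}$ involves $\norm{\de_x\tilde\omega(s)}$ and $\norm{\nabla\omega(s)}$, which the scheme controls only in time-integrated form ($L^1_s$ and $L^2_s$), not pointwise in $s$, so the claim that $f$ ``itself decays'' at rate $\nu^{1/2}$ is not available. The paper's substitute is the interval-restart device: it proves $\norm{\tilde\omega(s+t_0)}\le\frac12\norm{\tilde\omega(s)}$ on intervals of length $t_0\sim\nu^{-1/2}$ by rerunning the whole absorption argument with the quantity $B(t^*;s)$ started at time $s$ (legitimate because the energy estimate and the monotonicity of $\norm{\omega(t)}$ hold from any initial time), and then iterates; no continuity/bootstrap hypothesis on the decay rate is ever needed.

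The one genuine gap in your proposal is exactly the step you flagged and postponed: the bound on $\sup_t\norm{u_s}_{L^\infty}$ needed to absorb $u_s\de_x\tilde\omega$. You say you ``expect'' to control it by Agmon's inequality plus a separate bound on $\norm{u_s}$, but you never produce that separate bound, and without it the term $C\nu^{-2/3}\bigl(\sup_t\norm{u_s}_{L^\infty}\bigr)\mathcal A$ cannot be closed. The paper disposes of this in one line: $\norm{u_s}_{L^\infty}^2\le C\norm{u_s}\norm{\omega_s}\le C\norm{\omega_s}^2\le C\norm{\omega_{in}}^2$, the last inequality following from the monotonicity of $t\mapsto\norm{\omega(t)}$. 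Note, however, that the middle inequality $\norm{u_s}\le C\norm{\omega_s}$ is precisely the low-frequency issue you raise: on the zero mode $u_s=-\de_y^{-1}\omega_s$ and there is no Poincar\'e inequality on $\RR$, so this step is asserted in the paper without further justification. Your instinct thus points at the thinnest point of the published argument, but your proposal does not resolve it either; to match the paper you should either adopt its chain of inequalities (accepting that assertion) or supply the missing control on $\norm{u_s}$, e.g.\ through an assumption or propagation of a velocity-type norm of the zero mode.
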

\begin{proof}
Denote by $\mathcal{L}_\nu$ the linearized operator,  we have 
\begin{equation}
\de_t\tilde{\omega} -\mathcal{L}_\nu\tilde{\omega}=-\PP_{\neq}(u_s\de_x\tilde{\omega}+\tilde{u}\cdot \nabla \omega),
\end{equation}
thus, applying Duhamel formula we obtain
\begin{equation}
\tilde{\omega}(t)=e^{t\mathcal{L}_\nu}\tilde{\omega}_{in}-e^{t\mathcal{L}_\nu}\int_0^te^{-s\mathcal{L}_\nu}\PP_{\neq}(u_s\de_x\tilde{\omega}+\tilde{u}\cdot \nabla\omega)\di s.
\end{equation}
Taking the $L^2$ norm of the solution and using the linear enhanced dissipation estimates \eqref{enhanced diss} of the solution operator $e^{\mathcal{L}_\nu t}$ leads to
\begin{align}\label{stima tilde}
\begin{aligned}
\norm{\tilde{\omega}(t)}&\leq C_0e^{-c_0\nu^\frac{1}{2}t}\norm{\tilde{\omega}_{in}}+\int_0^t\norm{u_s\de_x\tilde{\omega}+\tilde{u}\cdot \nabla\omega}\di s\\
&\leq C_0e^{-c\nu^\frac{1}{2}t}\norm{\tilde{\omega}_{in}}+\int_0^t\norm{u_s\de_x\tilde{\omega}}\di s+\int_0^t\norm{\tilde{u}\cdot \nabla\omega}\di s,
\end{aligned}
\end{align}
where $c_0, C_0$ are the constants defined in Theorem \ref{thm hypo intro}.
Defining 
\begin{equation}\label{def A(t)}
A(t)=\nu^\frac{2}{3}\int_0^t\norm{\de_x\tilde{\omega}}+\nu^\frac{1}{6}(|\log\nu|+1)^{-\frac{1}{2}}\left(\int_0^t\norm{\tilde{u}}_{L^\infty}^2\di s\right)^\frac{1}{2},
\end{equation}
it follows from \eqref{ineq nonlinear} that
\begin{equation}\label{stima A(t)}
 A(t) \leq K\left(\norm{\tilde{\omega}_{in}}+\int_0^t\norm{u_s\de_x\tilde{\omega}}\di s+\int_0^t\norm{\tilde{u}\cdot \nabla\omega}\di s\right),
\end{equation}
for a fixed positive constant $K.$
To achieve nonlinear enhanced dissipation we proceed as follows.
For the second term on the right hand side in \eqref{stima tilde}, we have by using the definition of $A(t)$ and the hypothesis
\begin{align}\label{stima secondo}
\begin{aligned}
\int_0^t\norm{u_s\de_x\tilde{\omega}}\di s & \leq \int_0^t\norm{u_s}_{L^\infty}\norm{\de_x\tilde{\omega}}\di s\\
&\leq C\norm{\omega_{in}}\int_0^t\norm{\de_x\tilde{\omega}}\di s \\
&\leq C\nu^{-\frac{2}{3}}\norm{\omega_{in}}A(t)\\
&\leq \frac{1}{8K} A(t),
\end{aligned}
\end{align}
where in the last inequality we use that $\varepsilon_0\ll1$.
Analogously, for the third term on the right hand side in \eqref{stima tilde}, we have
\begin{align}\label{stima terzo}
\begin{aligned}
\int_0^t\norm{\tilde{u}\cdot \nabla\omega}&\leq \int_0^t\norm{\tilde{u}}_{L^\infty}\norm{\nabla\omega}\\
&\leq \left(\int_0^t\norm{\tilde{u}}_{L^\infty}^2\right)^\frac{1}{2}\left(\int_0^t\norm{\nabla\omega}^2\right)^\frac{1}{2}\\
&\leq\left( \nu^{-\frac{1}{6}}(|\log\nu|+1)^{\frac{1}{2}}A(t)\right)\left(\nu^{-\frac{1}{2}}\norm{\omega_{in}}\right)\\
&\leq \nu^{-\frac{2}{3}}(|\log\nu|+1)^{\frac{1}{2}}\norm{\omega_{in}}A(t)\\
&\leq \frac{1}{8K}A(t).
\end{aligned}
\end{align}
From \eqref{stima A(t)}, \eqref{stima secondo} and \eqref{stima terzo}, we can deduce that 
\begin{equation}
A(t)\leq  K\norm{\tilde{\omega}_{in}}+\frac{1}{4}A(t),
\end{equation}
hence 
\begin{equation}\label{stima su At}
A(t)\leq \frac{4}{3} K \norm{\tilde{\omega}_{in}}.
 \end{equation} 
Moreover we have 
\begin{equation}
\int_0^t\norm{u_s\de_x\tilde{\omega}+\tilde{u}\cdot \nabla\omega}\di s\leq \int_0^t\norm{u_s\de_x\tilde{\omega}}\di s+\int_0^t\norm{\tilde{u}\cdot \nabla\omega}\di s \leq \frac{1}{4K} A(t)\leq \frac{1}{3}\norm{\tilde{\omega}_{in}}.
\end{equation}
Fix now $t_0$ such that $C_0e^{-c_0\nu^\frac{1}{2}t_0}\leq\frac{1}{6}$, we have that 
\begin{equation}
\norm{\tilde{\omega}(t_0)}\leq \frac{1}{6}\norm{\tilde{\omega}_{in}}+\frac{1}{3}\norm{\tilde{\omega}_{in}}= \frac{1}{2}\norm{\tilde{\omega}_{in}}
\end{equation}
and 
\begin{equation}
\norm{\tilde{\omega}(t)}\leq C\norm{\tilde{\omega}_{in}},
\end{equation}
for all $0<t<t_0$.
To conclude the proof we use the same iteration argument used for the linear enhanced dissipation.  To do so, we prove that for every $s\geq 0$ it holds
\begin{equation}\label{est1}
\norm{\tilde{\omega}(s+t_0)}\leq \frac{1}{2}\norm{\tilde{\omega}(s)}
\end{equation}
and
\begin{equation}\label{est2}
\norm{\tilde{\omega}(s+t^*)}\leq C\norm{\tilde{\omega}(s)},
\end{equation}
for any $t^*\in(0,t_0].$
Hence, by iteration,  it follows that 
\begin{equation}
\norm{\tilde{\omega}(t)}\leq C\norm{\tilde{\omega}(\lfloor t_0^{-1}t\rfloor t_0)} \leq C\left(\frac{1}{2}\right)^{\lfloor t_0^{-1}t\rfloor}\norm{\tilde{\omega}_{in}}\leq C_1e^{-c_1\nu^\frac{1}{2}t}\norm{\tilde{\omega}_{in}}
\end{equation}
for all $t>0$, and the theorem is proved.
Estimates \eqref{est1} and \eqref{est2} follow from the previous computations done for $s=0$ and the fact that  $\norm{\omega_s(t)}\leq\norm{\omega_{in}}$.  Indeed, we have
\begin{equation}
\tilde{\omega}(s+t^*)=e^{t^*\mathcal{L}_\nu}\tilde{\omega}(s)-e^{t^*\mathcal{L}_\nu}\int_0^{t^*}e^{-\tau\mathcal{L}_\nu}\PP_{\neq}(u_s\de_x\tilde{\omega}+\tilde{u}\cdot \nabla\omega)(\tau+s)\di \tau
\end{equation}
and hence, analogously to \eqref{stima tilde}
\begin{equation}
\norm{\tilde{\omega}(s+t^*)}\leq e^{-c\nu^\frac{1}{2}t^*}\norm{\tilde{\omega}(s)}+\int_0^{t^*}\norm{u_s\de_x\tilde{\omega}(\tau +s)}\di \tau+\int_0^{t^*}\norm{\tilde{u}\cdot \nabla\omega(\tau+s)}\di \tau.
\end{equation}
In order to proceed with the same argument, we notice that 
\begin{equation}
\norm{u_s(\tau+s)}_{L^\infty}^2\leq C \norm{u_s(\tau+s)}\norm{\omega_s(\tau+s)}\leq C \norm{\omega_s(\tau+s)}^2\leq C\norm{\omega_{in}}^2
\end{equation}
where the last inequality follow from the fact that $t\mapsto \norm{\omega(t)}$ is decreasing.
Moreover we have
\begin{equation}
\norm{\omega(s+t^*)}^2+\frac{\nu}{2}\int_0^{t^*}\norm{\nabla\omega(\tau+s)}^2\di\tau\leq \norm{\omega(s)}^2\leq \norm{\omega_{in}}^2.
\end{equation}
Hence,   defining, as done in \eqref{def A(t)}, 
\begin{equation}
B(t^*;s)=\nu^\frac{2}{3}\int_0^{t^*}\norm{\de_x\tilde{\omega}(\tau+s)}\di \tau+\nu^\frac{1}{6}(|\log\nu|+1)^{-\frac{1}{2}}\left(\int_0^{t^*}\norm{\tilde{u}(\tau+s)}_{L^\infty}^2\di \tau\right)^\frac{1}{2},
\end{equation}
we can deduce the analogous estimate \eqref{stima su At}, 
\begin{equation}
B(t^*; s)\leq \frac{4}{3}K\norm{\tilde{\omega}(s)}
\end{equation}
and finally
\begin{equation}
\norm{\tilde{\omega}(s+t^*)}\leq C_0e^{-c_0\nu^{1/2}t^*}\norm{\tilde{\omega}(s)}+\frac{1}{3}\norm{\tilde{\omega}(s)}
\end{equation}
from which \eqref{est1}, \eqref{est2} follow.
\end{proof}

\paragraph{Acknowledgments}
The author wants to thank Michele Coti Zelati for his support and all the useful  and stimulating discussions about this problem.

\bibliographystyle{abbrv}
\bibliography{Biblio.bib}

\end{document}